\newtheorem{theorem}{Theorem}[section]
\newtheorem{proposition}[theorem]{Proposition}
\newtheorem{lemma}[theorem]{Lemma}
\newtheorem{corollary}[theorem]{Corollary}	
\theoremstyle{definition}
\newtheorem{remark}[theorem]{Remark}
\newtheorem{example}[theorem]{Example}
\title{{\bf\Large Existence of solutions for some nonlinear problems with boundary value conditions}}
\author{{\large Dionicio Pastor Dallos Santos }\footnote{Email: dionicio@ime.usp.br}\hspace{2mm}
{\bf\large}\vspace{1mm}\\
{\small Department of Mathematics, IME-USP, Cidade Universit\'aria,}\\
{\small  CEP 05508-090, S\~ao Paulo, SP, Brazil}
%\vspace{3mm}
}
\date{}
\begin{document}
\maketitle
%%%%%%%%%%%%%% ABSTRACT %%%%%%%%%%%%%% ABSTRACT %%%%%%%%%%%%%%%%%%%%%%%%%%%%%%%%%%%%%%%%%%%%%%%%%%%%%%%%%%%%

\begin{abstract}
In this paper we study the existence of solutions for  nonlinear boundary value problems
\[
\left\{\begin{array}{lll}
(\varphi(u' ))'   = f(t,u,u') & & \\
l(u,u')=0, & & \quad \quad 
\end{array}\right.
\] 
where $l(u,u') =0$ denotes  the Dirichlet or mixed  conditions on $\left[0, T\right]$, $\varphi$  is a   bounded, singular or classic
homeomorphism such that $\varphi(0)=0$, $f:\left[0, T\right]\times \mathbb{R} \times \mathbb{R}\rightarrow \mathbb{R} $ is a continuous function, and $T$ a positive real number. All the  contemplated  boundary value problems are reduced  to finding a fixed point for one  operator  defined on a  space of  functions, and  Schauder fixed point theorem or Leray-Schauder degree are used.
\end{abstract}

 \medskip

\noindent 
Mathematics Subject Classification (2010). 34B15; 34B16; 47H11. 

\noindent 
Key words:  Dirichlet problem, Schauder fixed point theorem, Leray-Schauder degree, mixed boundary value problems.

%%%%%%%%%%%%% INTRODUCTION %%%%%%%%%%%%%%% INTRODUCTION %%%%%%%%%%%%%%%%%%%%%%%%%%%%%%%%%%%%%%%%%%

\section{Introduction}
The purpose of this article is to obtain  some existence results for nonlinear boundary value problems of the form
\begin{equation}\label{equa1}
\left\{\begin{array}{lll}
(\varphi(u' ))'   = f(t,u,u') & & \\
l(u,u')=0, 
\end{array}\right.
\end{equation}
where $l(u,u') =0$ denotes the Dirichlet or mixed  boundary conditions  on the interval  $\left[0, T\right]$,
 $\varphi$  is a  bounded, singular  or classic   homeomorphism such that $\varphi(0)=0$,  $f:\left[0, T\right]\times \mathbb{R} \times \mathbb{R}\rightarrow \mathbb{R}$ is a continuous function, and $T$ a positive real number.
 
Recently, the problem (\ref{equa1}) in special cases, when $\varphi$ is an  increasing  homeomorphism  from $(-a, a)$ to $\mathbb{R}$ such that $\varphi(0)=0$ and $l(u,u')=0$ denotes the periodic, Neumann or Dirichlet boundary conditions, has been investigated by  C. Bereanu and  J. Mawhin in \cite{ma1}.

 In \cite{man8}, the authors have studied the problem (\ref{equa1}), where $\varphi:\mathbb{R} \rightarrow (-a, a) \ (0<a\leq \infty)$ and  periodic  boundary conditions. They obtained the existence of solutions by means of the Leray-Schauder degree theory.

%Inspired by these results, we study the problem (\ref{equa1}) by using similar topological methods based upon Leray-Schauder \cite{man12}.

The  paper is organized as follows. In Section 2, we introduce some notations and  preliminaries, which   will be crucial in the proofs of our results. Section 3 is devoted to the study of existence of solutions for the  Dirichlet problems with  bounded homomorphisms
\[
\left\{\begin{array}{lll}
(\varphi(u' ))'   = f(t,u,u') & & \\
u(0)=0=u(T). & & \quad \quad 
\end{array}\right.
\] 
In particular, C. Bereanu and  J. Mawhin  in  \cite{ma} proved the existence  of  at least one solution  by  means of  the  Leray-Schauder degree.
\begin{theorem}[Bereanu and Mawhin]\label{teo1} If the function  $f$ satisfies the condition                       
\begin{center}
$ \exists \, c>0 \text{ such that }  \left|f(t,x,y)\right|\leq c < \frac{a}{2T},$ \quad $\forall (t,x,y)\in[0, T]\times \mathbb{R}\times\mathbb{R}$,
\end{center}
the Dirichlet problem  has  at least  one solution.
\end{theorem}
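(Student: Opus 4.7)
My plan is to recast the Dirichlet problem as an equivalent fixed-point equation in $C^1([0,T])$ and invoke Schauder's theorem (or, as in \cite{ma}, the Leray--Schauder degree). Integrating $(\varphi(u'))' = f(t,u,u')$ on $[0,t]$ yields
\[
\varphi(u'(t)) = \alpha + \int_0^t f(s,u(s),u'(s))\,ds, \qquad \alpha := \varphi(u'(0)) \in (-a,a),
\]
and then the Dirichlet condition should determine $\alpha$ implicitly. Concretely, I would define the Nemytskii operator $N_f(u)(t):=f(t,u(t),u'(t))$ and seek $Q(u)\in\mathbb{R}$ solving
\[
G_u(Q(u)) := \int_0^T \varphi^{-1}\!\Bigl(Q(u)+\int_0^s N_f(u)(\tau)\,d\tau\Bigr)\,ds = 0,
\]
so that the candidate fixed-point operator reads
\[
\mathcal{M}(u)(t):=\int_0^t \varphi^{-1}\!\Bigl(Q(u)+\int_0^s N_f(u)(\tau)\,d\tau\Bigr)\,ds.
\]
Fixed points of $\mathcal{M}$ are precisely classical solutions of the Dirichlet problem.

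The heart of the argument is to show $Q(u)$ is well-defined and continuous. Since $|N_f(u)|\le c$, one has $\bigl|\int_0^s N_f(u)\bigr|\le cT<a/2$, so for any $\lambda$ with $|\lambda|<a-cT$ the argument of $\varphi^{-1}$ lies in $(-a,a)$, i.e.\ $G_u(\lambda)$ is defined on $(-(a-cT),a-cT)$. Monotonicity of $\varphi^{-1}$ makes $G_u$ strictly increasing, and as $\lambda\to\pm(a-cT)$ the integrand blows up to $\pm\infty$ on a subset of positive measure; the intermediate value theorem then yields a unique root $Q(u)$. Since $G_u(Q(u))=0$, the integrand must vanish at some $s^{\star}\in[0,T]$, which forces $Q(u)+\int_0^{s^{\star}} N_f(u)=0$ and hence $|Q(u)|\le cT$. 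Therefore $|Q(u)+\int_0^s N_f(u)|\le 2cT<a$ uniformly in $u$, and setting $M:=\max\{|\varphi^{-1}(2cT)|,|\varphi^{-1}(-2cT)|\}$ produces the a priori bound $\|\mathcal{M}(u)'\|_\infty\le M$, whence $\|\mathcal{M}(u)\|_{C^1}\le M(1+T)$.

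With $Q$ continuous (by the implicit definition plus strict monotonicity of $G_u$ and continuous dependence of $G_u$ on $u$), the operator $\mathcal{M}$ is continuous, and the a priori bound together with equicontinuity of $\mathcal{M}(u)'$, inherited from uniform continuity of $\varphi^{-1}$ on the compact set $[-2cT,2cT]\Subset(-a,a)$, gives via Arzel\`a--Ascoli that $\mathcal{M}$ is compact. Applying Schauder's fixed-point theorem to the closed ball of radius $M(1+T)$ in $C^1$ produces the desired solution; alternatively, the same a priori bound legitimizes a Leray--Schauder degree computation along the homotopy $H(\lambda, u) = \lambda\mathcal{M}(u)$.

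The main obstacle is the careful verification that the strict gap $c<a/(2T)$ does two things at once: it makes $G_u$ defined on an interval wide enough to catch a sign change via IVT, and it then forces the resulting root $Q(u)$ to stay well inside $(-a,a)$, preventing blow-up of $\varphi^{-1}$. Once that quantitative room is secured, continuity and compactness of $\mathcal{M}$ are standard, and the fixed-point argument is immediate.
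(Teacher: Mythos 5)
Your reduction is essentially the one the paper uses in Section 3: your $Q(u)$ is (up to sign) the paper's $Q_{\varphi}(H(N_f(u)))$ from Lemma \ref{lebema}, and your operator $\mathcal{M}$ is exactly the paper's $M(1,\cdot)$ from (\ref{diri3}). The difference is in how you close. The paper never proves Theorem \ref{teo1} directly; it quotes it from \cite{ma} and recovers it as the special case $n\equiv 0$, $h\equiv c$ of Theorem \ref{teopri}, whose proof runs a Leray--Schauder homotopy $M(\lambda,\cdot)$ over the set $V$ because, in that generality, the bound $\|\lambda H(N_f(u))\|_{\infty}<a/2$ is only available a priori for fixed points. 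Under the stronger hypothesis $|f|\le c<a/(2T)$ you observe, correctly, that $\|H(N_f(u))\|_{\infty}\le cT<a/2$ for \emph{every} $u\in C^{1}$, so $\mathcal{M}$ is globally defined, maps all of $C^{1}$ into a fixed ball of radius $M(1+T)$, and is compact; Schauder's theorem then applies with no homotopy and no degree computation. That is a legitimate and genuinely simpler route for this particular statement; what it gives up is only the machinery the paper needs anyway for Theorem \ref{teopri}.

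One step is wrong as stated, though easily repaired: the existence of the root $Q(u)$ of $G_u$. You claim that as $\lambda\to\pm(a-cT)$ the integrand blows up on a set of positive measure. Writing $h(s)=\int_0^s N_f(u)(\tau)\,d\tau$, the argument $\lambda+h(s)$ lies in $[\lambda+h_m,\lambda+h_M]$, and if $h_M<cT$ (for instance $f\equiv 0$, where $h\equiv 0$) this never approaches $a$, so $\varphi^{-1}$ stays bounded and no blow-up occurs. The sign change you need does occur, but for a different reason: since $cT<a/2$ gives $a-cT>cT\ge -h_m$, for $\lambda$ near $a-cT$ one has $\lambda+h(s)\ge\lambda+h_m>0$ for all $s$, hence $G_u(\lambda)>0$ (for increasing $\varphi$), and symmetrically $G_u(\lambda)<0$ near $-(a-cT)$. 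The paper's Lemma \ref{lebema} does this more cleanly by evaluating at $\lambda=-h_M$ and $\lambda=-h_m$, which also yields $|Q(u)|\le \|h\|_{\infty}$ at once. With that substitution your argument is sound; you should also say a word more about the continuity of $Q$, which the paper establishes by a bounded-subsequence plus dominated-convergence argument.
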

The main purpose of this section is an extension of the results obtained  in the previous theorem.  For this, we use  topological methods based upon Leray-Schauder degree \cite{man12} and more general properties of the function $f$. In Section 4, we  use  the fixed point theorem of Schauder to show the existence of at least one solution  for boundary value problems of the type
\[
\left\{\begin{array}{lll}
(\varphi(u' ))'   = f(t,u,u') & & \\
u(T)=u(0)=u'(T). & & \quad \quad 
\end{array}\right.
\]
where $\varphi:(-a,a)\rightarrow \mathbb{R}$ (we call it singular). We call \textsl{solution } of this problem any function $u:\left[0, T\right]\rightarrow \mathbb{R}$ of class $C^{1}$ such that $\displaystyle \max_{\left[0, T\right]}\left|u'(t)\right|<a$, satisfying  the boundary conditions and  the function \ $\varphi(u')$ is continuously differentiable and $(\varphi(u'(t) ))'  = f(t,u(t),u'(t))$ for all $t\in\left[0,T\right]$. In Section 5, for  $u(T)=u'(0)=u'(T)$ boundary  conditions  and  classic homeomorphisms $(\varphi:\mathbb{R} \rightarrow \mathbb{R})$, we investigate the existence of at least one solution 
using  Leray-Schauder degree, where a  \textsl{solution } of this problem is any function  $u:\left[0, T\right]\rightarrow \mathbb{R}$ of 
class $C^{1}$ such that $\varphi(u')$ is continuously differentiable, which satisfies the boundary conditions and $(\varphi(u'(t) ))'  = f(t,u(t),u'(t))$ for all $t\in\left[0,T\right]$. Such problems do not seem to have been studied in the literature. In the present paper generally we follow the ideas of Bereanu and Mawhin \cite{ man5, man4, man8, ma1, ma, man9}.

%%%%%%%%%%%%%%%%%%%%%%%%%%%%%%%%%%%%%%%%%%%%%%%%%%%%%%%%%%%%%%%%%%%%%%%%%%%%%%%%%%%%%%%%%%%%%%%%%%%%%%%%%%%%%%%%%%%%%%%%%%%%%%%%%%%%%%%%%%%%%%%%%%%%%%%%%%%%%%%%%%%%%%%%%%%%%%%%%%%%%%%%%%%%%%%%%%%%%%%%%%%%%%%%%%%%%%%%%%%%%%%%%%%%%%%%%%%%%%%%%%%%%%%%%%%%%%%%%%%%%%%%%%%%%%%%%%%%%%%%%%%%%%%%%%%%%%%%%%%%%%%%%%%%%%%%%%%%%%%%%%%%%%%%%%%%%%%%%%%%%%%%%%%%%%%%%%%%%%%%%%%%%%%%%%%%%%%%%%%%%%%%%%%%%%%%%%%%%%%%%%%%%%%%%%%%%%%%%%%%%%%%%%%%%%%%%%%%%%%%%%%%%%%%%%%%%%%%%%%%%%%%%%%%%%%%%%%%%%%%%%%%%%%%%%5

\section{Notation and preliminaries}
\label{S:-1}
We first introduce some notation. For fixed $T$, we denote  the usual norm in $L^{1}=L^{1}(\left[0,T\right], \mathbb R)$ for $\left\| \cdot \right\|_{L^{1}}$. For $C=C(\left[0,T\right], \mathbb R)$ we indicate the Banach space of all  continuous functions from $\left[0, T\right]$ into $\mathbb R$ witch the norm  $\left\| \cdot \right\|_{\infty}$, $C^{1}=C^{1}(\left[0,T\right], \mathbb R)$  denote the Banach space of continuously differentiable functions  from $\left[0, T\right]$ into $\mathbb R$ endowed witch the usual norm $\left\|u\right\|_{1}=\left\|u\right\|_{\infty} +  \left\|u' \right\|_{\infty}$ and for   $C^{1}_{0}$  we  designate the closed subspace of $C^{1}$ defined by  $C^{1}_{0}=\left\{u\in C^{1}:u (T)=0=u(0)\right\}$. 

We introduce the following applications:
\medskip

\noindent 
the  \textit{Nemytskii operator} $N_f:C^{1} \rightarrow C $, 
\begin{center}
$N_f (u)(t)=f(t,u(t),u'(t))$, 
\end{center}
the  \textit{integration operator}   $H:C \rightarrow C^{1}$, 
\begin{center}
$ H(u)(t)=\int_0^t u(s)ds$, 
\end{center}
\noindent 
the following continuous linear applications:
\begin{center}
$K:C \rightarrow C^{1}, \ \  K(u)(t)=-\int_t^T u(s)ds $,
\end{center}
\begin{center}
$Q:C \rightarrow C, \ \  Q(u)(t)=\frac{1}{T}\int_0^T u(s)ds$,
\end{center}
\begin{center}
$S:C \rightarrow C, \ \  S(u)(t)=u(T)$,
\end{center}
\begin{center}
$P:C \rightarrow C, \ \  P(u)(t)=u(0)$.
\end{center}

For  $u\in C$, we write
\begin{center}
$ u_{m}=\displaystyle  \min_{[0,T]} u,  \  u_{M}= \displaystyle \max_{[0,T]}u,  \   u^{+}=\displaystyle \max\left\{ u,0 \right\},  \  u^{-}=\displaystyle \max \left\{ -u,0 \right\}$.
\end{center}
 
The following lemma is an adaptation of a result of \cite{ma1} to the case of a homeomorphism which is not defined everywhere. We present here the demonstration for better understanding of the development of our research.
\begin{lemma}\label{lebema}
Let $B=\left\{h\in C:\left\|h \right\|_{\infty}<a/2\right\}$. For  each $h\in B$,  there exists a unique  $Q_{\varphi}=Q_{\varphi}(h)\in Im(h)$ (where $Im(h)$ denotes the range of  $h$) such that
\begin{center}
$\int_0^T \varphi^{-1}(h(t)-Q_{\varphi}(h))dt = 0$.
\end{center}
Moreover, the function    $Q_{\varphi}:B\rightarrow \mathbb{R}$ is continuous  and sends bounded sets into bounded sets.
\end{lemma}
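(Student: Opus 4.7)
Fix $h\in B$ and consider the auxiliary function
\[
G_h:[h_m,h_M]\to\mathbb{R},\qquad G_h(Q)=\int_0^T\varphi^{-1}\bigl(h(t)-Q\bigr)\,dt .
\]
Since $|h(t)|<a/2$ and $|Q|\le\|h\|_\infty<a/2$ for every $Q\in[h_m,h_M]$, the argument $h(t)-Q$ lies in $(-a,a)=\operatorname{dom}(\varphi^{-1})$, so $G_h$ is well defined and (by continuity of $\varphi^{-1}$ on the compact set $[-2\|h\|_\infty,2\|h\|_\infty]$) continuous in $Q$. Because $\varphi^{-1}$ is strictly increasing and $\varphi^{-1}(0)=0$, the integrand is pointwise strictly decreasing in $Q$, which gives $G_h(h_m)\ge 0\ge G_h(h_M)$ and the strict monotonicity of $G_h$. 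The intermediate value theorem then produces a unique $Q_\varphi(h)\in[h_m,h_M]=\operatorname{Im}(h)$ with $G_h(Q_\varphi(h))=0$.

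For the continuity of $Q_\varphi$, let $h_n\to h$ in $C$. Since $\|h\|_\infty<a/2$, there exist $\varepsilon>0$ and $n_0$ with $\|h_n\|_\infty\le M_0:=a/2-\varepsilon$ for all $n\ge n_0$; hence $|Q_\varphi(h_n)|\le M_0$ for such $n$, so the sequence is bounded. The standard subsequence argument applies: take any convergent subsequence $Q_\varphi(h_{n_k})\to Q^{*}$. On the compact set $K=[-2M_0,2M_0]\subset(-a,a)$ the function $\varphi^{-1}$ is uniformly continuous, so
\[
0=\int_0^T\varphi^{-1}\bigl(h_{n_k}(t)-Q_\varphi(h_{n_k})\bigr)\,dt\;\longrightarrow\;\int_0^T\varphi^{-1}\bigl(h(t)-Q^{*}\bigr)\,dt .
\]
By uniqueness $Q^{*}=Q_\varphi(h)$, so the whole sequence $Q_\varphi(h_n)$ converges to $Q_\varphi(h)$.

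The boundedness statement is immediate from the inclusion $Q_\varphi(h)\in[h_m,h_M]$: if $S\subset B$ satisfies $\sup_{h\in S}\|h\|_\infty\le M<a/2$, then $|Q_\varphi(h)|\le M$ for every $h\in S$.

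The only genuinely delicate point is controlling the argument of $\varphi^{-1}$ away from the singular values $\pm a$. The hypothesis $\|h\|_\infty<a/2$ is precisely what makes $|h(t)-Q|<a$ for all admissible $Q$, and in the continuity argument one needs to exploit the \emph{strict} inequality to obtain a uniform $M_0<a/2$ along the converging sequence; without that uniformity, dominated convergence (or uniform continuity of $\varphi^{-1}$ on a compact set) could fail near the boundary of $(-a,a)$.
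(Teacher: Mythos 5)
Your proof is correct and follows essentially the same route as the paper: the same auxiliary function $G_h$ on $[h_m,h_M]$, existence via the intermediate value theorem from the sign condition at the endpoints, the bound $|Q_\varphi(h)|\le\|h\|_\infty$, and continuity by the bounded-subsequence/uniqueness argument (the paper passes to the limit by dominated convergence where you use uniform continuity of $\varphi^{-1}$ on a compact subset of $(-a,a)$, and it gets uniqueness from injectivity of $\varphi^{-1}$ at a single point rather than from strict monotonicity of $G_h$ --- cosmetic differences). One small nit: you assume $\varphi^{-1}$ is strictly \emph{increasing}, whereas the paper only assumes $\varphi$ is a homeomorphism with $\varphi(0)=0$, hence strictly monotone of unknown sign; your argument goes through verbatim in the decreasing case with the inequalities reversed, but you should say so.
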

\begin{proof} 
Let $h\in B$. We define the continuous application $G_{h}: \left[h_{m}, h_{M}\right] \rightarrow \mathbb{R}$  for
\begin{center}
$G_{h}(s)=\int_0^T \varphi^{-1}(h(t)-s)dt$.
\end{center}
We now show that the equation
\begin{equation}\label{paz}
G_{h}(s)=0
\end{equation}
has a unique solution  $Q_{\varphi}(h)$. Let  $r, \  s \in \left[h_{m}, h_{M}\right] $ be such that
\begin{center}
$\int_0^T \varphi^{-1}(h(t)-r)dt=0, \  \int_0^T \varphi^{-1}(h(t)-s)dt=0$,
\end{center}
that is
\begin{center}
$\int_0^T \varphi^{-1}(h(t)-r)dt=\int_0^T \varphi^{-1}(h(t)-s)dt$.
\end{center}
It follows that there exist  $\tau \in \left[0, T\right]$  such that
\begin{center}
$\varphi^{-1}(h(\tau)-r)=\varphi^{-1}(h(\tau)-s)$.
\end{center}
Using the injectivity  of  $\varphi^{-1}$  we deduce that  $r=s$. Let us now show the existence. Because   $\varphi^{-1}$  is strictly monotone  and  $\varphi^{-1}(0)=0$, we have that
\begin{center}
$G_{h}(h_{m})G_{h}(h_{M})\leq0$.
\end{center}
It follows that there exists  $s\in\left[h_{m}, h_{M}\right]$ such that  $G_{h}(s)=0$. Consequently for each  $h\in B$, the equation (\ref{paz}) has a unique solution. Thus, we define the function $Q_{\varphi}: B \rightarrow \mathbb{R} $ such that
\begin{center}
$\int_0^T \varphi^{-1}(h(t)-Q_{\varphi}(h))dt = 0$.
\end{center}

On the other hand, because $h\in B$, we have that  
\begin{center}
$\left|Q_{\varphi}(h)\right|\leq \left\|h\right\|_{\infty}<a/2$.
\end{center}
Therefore, the function  $Q_{\varphi}$  sends bounded sets into bounded sets.

Finally, we show that  $Q_{\varphi}$ is continuous on $B$. Let $\left(h_{n}\right)_{n}\subset C$  be a sequence such 
that $h_{n}\rightarrow h$ in $C$. Since the function  $Q_{\varphi}$  sends bounded sets into bounded sets, then  $\left(Q_{\varphi}(h_{n})\right)_{n}$ is bounded. Hence, $\left(Q_{\varphi}(h_{n})\right)_{n}$ is relatively compact. Without loss of generality, passing   if necessary to a subsequence, we can  assume that
\begin{center}
$ \displaystyle \lim_{n \to \infty} Q_{\varphi}(h_{n})=\widetilde{a} $,
\end{center}
where for each $n\in \mathbb{N}$  we obtain 
\begin{center}
$ \displaystyle \lim_{n \to \infty}\int_0^T \varphi^{-1}(h_{n}(t)-Q_{\varphi}(h_{n}))dt=0 $.
\end{center}
Using the  dominated convergence theorem, we deduce that
\begin{center}
$ \int_0^T \varphi^{-1}(h(t)-\widetilde{a})dt=0$,
\end{center}
so we have that $Q_{\varphi}(h)=\widetilde{a}$.  Hence, the function  $Q_{\varphi}$ is continuous.
\end{proof}
The following extended homotopy invariance property of the Leray-Schauder degree, can be found in   \cite{man6}.
\begin{proposition}\label{propo1}
Let $X$  be a real Banach space, $V\subset [0,1]\times X$ be an open, bounded set and  $M$ be a completely continuous operator  on  $\overline{V}$  such that $x\neq M(\lambda,x)$  for each  $(\lambda,x)\in\partial V$. Then the Leray-Shauder degree
\begin{center}
$ deg_{LS}(I-M(\lambda,.),V_{\lambda},0)$
\end{center}
is well defined  and independent of   $\lambda$  in   $[0,1]$, where  $V_{\lambda}$ is the open, bounded (possibly empty) set defined  by  $V_{\lambda}=\left\{x\in X:(\lambda,x)\in V\right\}$.
\end{proposition}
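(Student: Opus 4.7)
The plan is to prove this by reducing it to the classical fixed-domain homotopy invariance of the Leray–Schauder degree, using excision to pass between the varying sections $V_\lambda$ and a common open set. The argument has three stages: well-definedness of the degree for each $\lambda$, local constancy of $\lambda \mapsto \deg_{LS}(I-M(\lambda,\cdot),V_\lambda,0)$, and a connectedness argument on $[0,1]$.

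For well-definedness, I first note that $V_\lambda$ is open in $X$ because it is the horizontal section of the open set $V\subset[0,1]\times X$, and bounded because $V$ is bounded. For any $x\in\partial V_\lambda\subset X$, one has $(\lambda,x)\in\partial V$ in $[0,1]\times X$, so the boundary hypothesis $x\neq M(\lambda,x)$ applies; moreover $M(\lambda,\cdot)$ is completely continuous on $\overline{V_\lambda}$ because the restriction of a completely continuous operator stays completely continuous. Hence $\deg_{LS}(I-M(\lambda,\cdot),V_\lambda,0)$ is defined for every $\lambda\in[0,1]$.

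The heart of the proof is the following local statement: given $\lambda_0\in[0,1]$, there exist $\varepsilon>0$ and a bounded open set $W\subset X$ such that $\overline{W}\subset V_\lambda$ for every $\lambda\in J:=[\lambda_0-\varepsilon,\lambda_0+\varepsilon]\cap[0,1]$, and such that every fixed point of $M(\lambda,\cdot)$ in $\overline{V_\lambda}$ lies in $W$ for all $\lambda\in J$. To build $W$, I consider the fixed-point set $F=\{(\lambda,x)\in\overline{V}:x=M(\lambda,x)\}$; since $M$ is completely continuous and $\overline{V}$ is bounded, $F$ is compact, and by the boundary hypothesis $F\cap\partial V=\emptyset$, so $F$ is a compact subset of the open set $V$. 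A standard tube-neighborhood argument then produces $\varepsilon>0$ and an open $W\supset F_{\lambda_0}:=\{x:(\lambda_0,x)\in F\}$ with $J\times\overline{W}\subset V$. The separation-of-fixed-points claim (no fixed points of $M(\lambda,\cdot)$ in $\overline{V_\lambda}\setminus W$ for $\lambda\in J$) is proved by contradiction: a violating sequence $(\lambda_n,x_n)$ would satisfy $x_n=M(\lambda_n,x_n)$ with $(\lambda_n,x_n)\in\overline{V}$ bounded; complete continuity forces a convergent subsequence of $\{x_n\}$ with limit $x$ satisfying $x=M(\lambda_0,x)$, i.e.\ $(\lambda_0,x)\in F$, hence $x\in F_{\lambda_0}\subset W$, contradicting $x_n\notin W$ once $\varepsilon$ is taken small enough that a neighborhood of $F_{\lambda_0}$ lies in $W$.

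Once $W$ is in hand, the excision property gives $\deg_{LS}(I-M(\lambda,\cdot),V_\lambda,0)=\deg_{LS}(I-M(\lambda,\cdot),W,0)$ for every $\lambda\in J$, and the classical homotopy invariance applied to the fixed open bounded domain $W$ and the completely continuous homotopy $(\lambda,x)\mapsto M(\lambda,x)$ on $J\times\overline{W}$ (which is fixed-point free on $J\times\partial W$ by the separation claim) shows that the right-hand side is constant on $J$. This gives local constancy of the degree, and since $[0,1]$ is connected the degree is constant throughout. I expect the main technical obstacle to be the separation-of-fixed-points argument, where one must combine openness of $V$ with the compactness of $F$ and complete continuity of $M$ to choose $W$ and $\varepsilon$ consistently; everything else is bookkeeping with excision and the classical invariance theorem.
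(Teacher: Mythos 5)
Your argument is correct, but note that the paper does not prove Proposition~\ref{propo1} at all: it is quoted as a known result, with a citation to Mawhin's survey \cite{man6}, so there is no in-paper proof to compare against. What you have written is essentially the standard self-contained proof of the extended (variable-domain) homotopy invariance, and all the key points check out: the identification $\partial V_\lambda\times\{\lambda\}\subset\partial V$ (so the sectionwise degrees are defined), the compactness of the fixed-point set $F$ (closedness plus the relative compactness of $M(\overline V)$), the tube neighborhood $J\times\overline W\subset V$ around the compact slice $F_{\lambda_0}$, the sequential-compactness contradiction that rules out fixed points in $\overline{V_\lambda}\setminus W$ for $\lambda$ near $\lambda_0$, and the reduction via excision to the classical fixed-domain homotopy invariance on $W$, followed by connectedness of $[0,1]$. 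Two small remarks: the final clause of your separation argument (``once $\varepsilon$ is taken small enough that a neighborhood of $F_{\lambda_0}$ lies in $W$'') is unnecessary, since openness of $W$ already forces $x_{n_k}\in W$ for large $k$ once $x_{n_k}\to x\in W$; and it is worth saying explicitly that the case $F_{\lambda_0}=\emptyset$ (hence $W=\emptyset$, degree $0$ near $\lambda_0$) is covered by the same excision step, which also handles empty sections $V_\lambda$. What your approach buys over the paper's is a complete, verifiable argument from the classical Leray--Schauder axioms; what the citation buys is brevity, since this proposition is genuinely a known result.
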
 

%%%%%%%%%%%%%%%%%%%%%%%%%%%%%%%%%%%%%%%%%%%%%%%%%%%%%%%%%%%%%%%%%%%%%%%%%%%%%%%%%%%%%%%%%%%%%%%%%%%%%%%%%%%%%%%%%%%%%%%%%%%%%%%%%%%%%%%%%%%%%%%%%%%%%%%%%%%%%%%%%%%%%%%%%%%%%%%%%%%%%%%%%%%%%%%%%%%%%%%%%%%%%%%%%%%%%%%%%%%%%%%%%%%%%%%%%%%%%%%%%%%%%%%%%%%%%%%%%%%%%%%%%%%%%%%%%%%%%%%%%%%%%%%%%%%%%%%%%%%%%%%%%%%%%%%%%%%%%%%%%%%%%%%%%%%%%%%%%%%%%%%%%%%%%%%%%%%%%%%%%%%%%%%%%%%%%%%%%%%%%%%%%%%%%%%%%%%%%%%%%%%%%%%%%%%%%%%%%%%%%%%%%%%%%%%%%%%%%%%%%%%%%%%%%%%%%%%%%%%%%%%%%%%%%%%%%%%%%%%%%%%%%%%%

\section{Dirichlet problems with  bounded homeomorphisms}
\label{S:0}

In this section we are interested  in  Dirichlet boundary value problems of the type
\begin{equation}\label{diri1}
\left\{\begin{array}{lll}
(\varphi(u' ))'   = f(t,u,u') & & \\
u(0)=0=u(T), 
\end{array}\right.
\end{equation}
where  $\varphi:\mathbb{R} \rightarrow (-a,a)$ is  a homeomorphism such that $\varphi(0)=0$ and $f:\left[0, T\right]\times \mathbb{R} \times \mathbb{R}\rightarrow \mathbb{R} $ is a continuous function. In order to apply Leray-Schauder degree theory  to show  the existence of  at least one  solution of (\ref{diri1}), we introduce, for  $ \lambda \in [0,1]$, the family of Dirichlet boundary value problems
\begin{equation}\label{diri2}
\left\{\begin{array}{lll}
(\varphi(u' ))'   = \lambda f(t,u,u') & & \\
u(0)=0=u(T). 
\end{array}\right.
\end{equation}
Let
\begin{center}
$\Omega = \left\{(\lambda,u)\in[0,1]\times C^{1}_{0}:\left\|\lambda H(N_{f}(u))\right\|_{\infty}<a/2\right\}$. 
\end{center}
Clearly $\Omega$ is  an open set in $[0,1]\times C^{1}_{0}$, and  is  nonempty because $\left\{0\right\}\times C^{1}_{0}\subset\Omega$. Using Lemma \ref{lebema}, we can define the operator  $M:\Omega \rightarrow C^{1}_{0}$ by 
\begin{equation}\label{diri3}
 M(\lambda,u)=H(\varphi^{-1}\left[ \lambda H(N_f(u))- Q_{\varphi}(\lambda H(N_f(u)))\right]). 
\end{equation}    
Here  $\varphi^{-1}$  with an abuse of notation is understood as the operator $\varphi^{-1}:B_{a}(0) \subset C \rightarrow C$ defined by $\varphi^{-1}(v)(t)=\varphi^{-1}(v(t))$. It is clear that   $\varphi^{-1}$ is  continuous and sends bounded sets into bounded sets.

When the boundary conditions are periodic or Neumann, an operator has been considered by Bereanu and Mawhin \cite{ma}.

The following lemma plays a pivotal role to study the solutions of the  problem (\ref{diri2}).
\begin{lemma}\label{dallos1}
The operator  $M$ is well  defined  and continuous. Moreover, if  $(\lambda,u)\in \Omega$  is such that  $M(\lambda,u)=u$, then   $u$ is solution of  (\ref{diri2}).
\end{lemma}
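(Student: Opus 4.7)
The plan is to verify the three claims in order, using mostly bookkeeping on the composition of continuous operators already introduced.

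First I would establish well-definedness. Fix $(\lambda,u)\in\Omega$. Since $f$ is continuous and $u\in C^1$, the function $N_f(u)$ lies in $C$, hence $H(N_f(u))\in C^1\subset C$; the definition of $\Omega$ forces $\lambda H(N_f(u))\in B$, so Lemma \ref{lebema} provides $Q_\varphi(\lambda H(N_f(u)))$ with $|Q_\varphi(\lambda H(N_f(u)))|<a/2$. Together with $\|\lambda H(N_f(u))\|_\infty<a/2$, this gives $\|\lambda H(N_f(u))-Q_\varphi(\lambda H(N_f(u)))\|_\infty<a$, which places the argument inside the domain of $\varphi^{-1}\colon B_a(0)\subset C\to C$; applying $H$ then produces a $C^1$ function. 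For the boundary behavior, $M(\lambda,u)(0)=0$ is immediate, while
\[
M(\lambda,u)(T)=\int_0^T \varphi^{-1}\bigl(\lambda H(N_f(u))(t)-Q_\varphi(\lambda H(N_f(u)))\bigr)\,dt=0
\]
by the very defining property of $Q_\varphi$; hence $M(\lambda,u)\in C^1_0$.

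Next I would address continuity. The operators $N_f\colon C^1\to C$ and $H\colon C\to C^1$ are continuous (the former by continuity of $f$ plus dominated convergence on a compact interval), the scalar multiplication and subtraction are continuous, $Q_\varphi$ is continuous on $B$ by Lemma \ref{lebema}, and $\varphi^{-1}$ acts continuously on $B_a(0)\subset C$ because $\varphi^{-1}\colon(-a,a)\to\mathbb{R}$ is a homeomorphism and the relevant arguments stay in a compact subset of $(-a,a)$. Thus $M$ is a composition of continuous maps, hence continuous from $\Omega$ into $C^1_0$.

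Finally I would prove the fixed-point/solution equivalence. Assume $M(\lambda,u)=u$, i.e.
\[
u(t)=\int_0^t \varphi^{-1}\bigl(\lambda H(N_f(u))(s)-Q_\varphi(\lambda H(N_f(u)))\bigr)\,ds.
\]
Differentiating once yields
\[
\varphi(u'(t))=\lambda\int_0^t f(s,u(s),u'(s))\,ds-Q_\varphi(\lambda H(N_f(u))),
\]
which shows $\varphi(u')\in C^1$, and a second differentiation gives $(\varphi(u'(t)))'=\lambda f(t,u(t),u'(t))$. The boundary conditions $u(0)=0=u(T)$ hold because $u\in C^1_0$. I do not expect a serious obstacle: the only subtle points are checking that the argument of $\varphi^{-1}$ really lies in $(-a,a)$ (which needs the combined bounds on $\lambda H(N_f(u))$ and on $Q_\varphi$ coming from Lemma \ref{lebema}) and that the endpoint condition $M(\lambda,u)(T)=0$ is exactly what $Q_\varphi$ was designed to enforce.
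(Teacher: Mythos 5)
Your proof is correct and follows essentially the same route as the paper's: well-definedness via the defining property of $Q_{\varphi}$ from Lemma \ref{lebema}, continuity by composition, and the fixed-point implication by differentiating, applying $\varphi$, and differentiating again. The only difference is that you make explicit the bound $\|\lambda H(N_f(u))-Q_{\varphi}(\lambda H(N_f(u)))\|_{\infty}<a/2+a/2=a$ guaranteeing that the argument stays in the domain of $\varphi^{-1}$, a point the paper leaves implicit in its appeal to Lemma \ref{lebema}.
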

\begin{proof}
Let $(\lambda,u)\in \Omega$. We show that in fact $M(\lambda,u)\in C^{1}_{0}$. It is clear that
\begin{center}
 $\left(M(\lambda,u)\right)^{'}=\varphi^{-1}\left[\lambda H(N_f(u))- Q_{\varphi}(\lambda H(N_f(u))) \right]$,
\end{center}
where the  continuity of $M(\lambda, u) $ and $\left(M(\lambda,u)\right)^{'}$ follows from  the continuity of the applications $H$ and  $N_{f}$. 
 
On the other hand using  Lemma \ref{lebema}, we have
\begin{center}
 $  M(\lambda,u)(0)=0= M(\lambda,u)(T)$.
\end{center}
Therefore   $M(\Omega)\subset  C^{1}_{0}$ and  $M$ is well defined. The continuity of $M$  follows by the continuity of the operators which compose it $M$.

 Now suppose that   $(\lambda,u)\in \Omega$  is such that  $M(\lambda,u)=u$. It follows from (\ref{diri3}) that
\begin{center}
 $ u(t)= M(\lambda,u)(t)=H(\varphi^{-1}\left[ \lambda H(N_f(u))- Q_{\varphi}(\lambda H(N_f(u)))\right])(t) $
\end{center}
 for all  $t\in\left[0,T\right]$. Differentiating, we obtain
\begin{center}
 $  u'(t)= \varphi^{-1}\left[\lambda H(N_f(u))- Q_{\varphi}(\lambda H(N_f(u)))\right](t)$.
\end{center}
Applying   $\varphi$  to both of its members and differentiating again, we deduce that
\begin{center}
 $(\varphi(u'(t) ))'=\lambda N_{f}(u)(t)$
\end{center}
for all $t\in\left[0, T\right]$. Thus, $u$ satisfies problem (\ref{diri2}). This completes the proof.
\end{proof}
\begin{remark} 
Note that the reciprocal of Lemma \ref{dallos1} is not true because we can not guarantee that $\left\|\lambda H(N_{f}(u)\right\|_{\infty} <a/2$  for  $u$ solution  of (\ref{diri2})
\end{remark}
In our main result, we need the following lemma to obtain the required  a priori bounds for the possibles fixed points of $M$.
\begin{lemma}\label{dallos}
Assume that there exist  $h\in C([0,T],\mathbb{R^{+}})$ \ and \  $n\in C^{1}(\mathbb{R},\mathbb{R})$ such that 
\begin{center}
$\left\|h \right\|_{L^{1}}<a/2,  \   \varphi(y)n'(x)y\geq 0 ,  \   n(0)=0$,
\end{center}
and 
\begin{center}
$\left|f(t,x,y)\right|\leq f(t,x,y)n(x) + h(t)$
\end{center}
for all  $(t,x,y)\in[0,T]\times \mathbb{R} \times \mathbb{R}  $. If  $(\lambda,u)\in \Omega$ is such that $M(\lambda,u)=u$, then                
\begin{center}
$\left\|\lambda H(N_{f}(u))\right\|_{\infty}\leq\left\|h \right\|_{L^{1}},  \   \left\|u' \right\|_{\infty}\leq L$ \   and  \   $\left\| u \right\|_{1}\leq L +LT$,
\end{center}
where  $L= \max \left\{\left|\varphi^{-1}(-2\left\|h \right\|_{L^{1}})\right|,\left|\varphi^{-1}(2\left\|h \right\|_{L^{1}})\right|\right\}$.
\end{lemma}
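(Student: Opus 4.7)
The plan is to use the fixed-point identity $M(\lambda,u)=u$, which via Lemma~\ref{dallos1} makes $u$ a solution of $(\varphi(u'))' = \lambda f(t,u,u')$ with $u(0)=u(T)=0$, together with the explicit representation
\[
u'(t)=\varphi^{-1}\bigl[\lambda H(N_f(u))(t)-Q_{\varphi}(\lambda H(N_f(u)))\bigr].
\]
With these two pieces in hand, the three estimates will be obtained sequentially, the last two being simple consequences of the first.

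For the bound on $\|\lambda H(N_f(u))\|_\infty$, I would first dispose of $\lambda=0$ trivially, and for $\lambda>0$ test the ODE against the multiplier $n(u(t))$ and integrate by parts over $[0,T]$:
\[
\int_0^T n(u)\,(\varphi(u'))'\,dt=\bigl[n(u)\varphi(u')\bigr]_0^T-\int_0^T n'(u)\,u'\,\varphi(u')\,dt.
\]
The boundary term vanishes since $u(0)=u(T)=0$ and $n(0)=0$, while the remaining integral is nonnegative by the sign hypothesis $\varphi(y)n'(x)y\geq 0$. Hence $\lambda\int_0^T n(u)\,f(t,u,u')\,dt\leq 0$. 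Integrating the growth condition $|f(t,x,y)|\leq f(t,x,y)n(x)+h(t)$ then gives $\int_0^T|f(t,u,u')|\,dt\leq\|h\|_{L^1}$, from which
\[
\|\lambda H(N_f(u))\|_\infty\leq\lambda\int_0^T|f(t,u,u')|\,dt\leq\|h\|_{L^1}.
\]

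For the remaining bounds, Lemma~\ref{lebema} gives $|Q_{\varphi}(\lambda H(N_f(u)))|\leq\|\lambda H(N_f(u))\|_\infty\leq\|h\|_{L^1}$, so the argument of $\varphi^{-1}$ in the formula for $u'(t)$ lies pointwise in $[-2\|h\|_{L^1},\,2\|h\|_{L^1}]$. Since $\varphi^{-1}$ is strictly monotone with $\varphi^{-1}(0)=0$, this immediately forces $|u'(t)|\leq L$ for every $t\in[0,T]$, i.e., $\|u'\|_\infty\leq L$. Finally, because $u(0)=0$, integration yields $\|u\|_\infty\leq T\|u'\|_\infty\leq LT$, and thus $\|u\|_1=\|u\|_\infty+\|u'\|_\infty\leq L+LT$.

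The main obstacle is the first step: recognizing that the peculiar-looking hypothesis $\varphi(y)n'(x)y\geq 0$, combined with $n(0)=0$, is exactly tailored so that integration by parts against the multiplier $n(u)$ produces a definite sign for $\int_0^T n(u)\,f\,dt$. The growth assumption is then precisely the shape needed for that sign to translate into an $L^1$-bound on $f$ itself. Once the correct test function is identified, the rest is a chain of inequalities using only monotonicity of $\varphi^{-1}$ and the Dirichlet boundary data.
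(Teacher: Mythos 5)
Your proposal is correct and follows essentially the same route as the paper: testing the equation against $n(u)$, integrating by parts so that the boundary conditions, $n(0)=0$, and the sign hypothesis $\varphi(y)n'(x)y\geq 0$ force $\int_0^T f\,n(u)\,dt\leq 0$, then feeding this into the growth condition to bound $\int_0^T|f|\,dt$ by $\|h\|_{L^1}$, and finally using $Q_{\varphi}(\lambda H(N_f(u)))\in Im(\lambda H(N_f(u)))$ together with the monotonicity of $\varphi^{-1}$ and the Dirichlet data to obtain the bounds on $\|u'\|_\infty$ and $\|u\|_1$. The treatment of the case $\lambda=0$ (where $M(0,u)=0$ forces $u=0$) is also handled, so no gap remains.
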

\begin{proof} 
Let  $\lambda\neq 0$ and  $(\lambda,u)\in \Omega$ be such that  $M(\lambda,u)=u$. Using Lemma \ref{dallos1}, we have  that  $u$ is solution of  (\ref{diri2}), which implies that 
\begin{center} 
 $\varphi(u')= \lambda H(N_f(u))-Q_{\varphi}(\lambda H(N_f(u)))$, \   $u(0)=0=u(T)$,
 \end{center}
where for all  $t\in[0,T]$, we obtain
\begin{align*}
\left|\lambda H(N_{f}(u))(t)\right|  &\leq  \int_0^T \left|f(s,u(s),u'(s))\right|ds  \\
&\leq \int_0^T f(s,u(s),u'(s))n(u(s)) ds +\int_0^T h(s)ds.
\end{align*}

On the other hand, because  $\varphi$  is a homeomorphism such that
\begin{center}  
$\varphi(y)n'(x)y\geq 0$
\end{center}
for all  $x, y \in \mathbb{R}$, then 
\begin{center}
$-\int_0^T \varphi(u'(t))n'(u(t))u'(t) dt \leq 0$.
\end{center}
Using the integration by parts formula, the boundary conditions and the fact that  $n(0)=0$,  we deduce that 
\begin{center}
$\int_0^T (\varphi(u'(t)))'n(u(t)) dt = - \int_0^T \varphi(u'(t))n'(u(t))u'(t) dt \leq 0$.
\end{center}
Since  $\lambda\in (0, 1]$ and  $u$ is solution of  (\ref{diri2}), we  have that
\begin{center}
$\int_0^T f(t,u(t),u'(t))n(u(t))dt \leq0$,
\end{center}
and hence
\begin{center}
$\left|\lambda H(N_{f}(u))(t)\right|\leq\left\|h \right\|_{L^{1}}$.
\end{center}

On the other hand, since  that  $Q_{\varphi}(\lambda H(N_f(u))) \in Im(\lambda H(N_f(u)))$, we get
\begin{center}
$ \left|\varphi(u'(t))\right|\leq 2\left\|h \right\|_{L^{1}}$  
\end{center}
for all $t\in [0,T]$. It follows that   
\begin{center}
$\left\|\varphi(u')\right\|_{\infty}\leq 2\left\|h \right\|_{L^{1}}$,
\end{center}
 which implies that  $\left\|u' \right\|_{\infty}\leq L$, where  $L= $max$\left\{\left|\varphi^{-1}(-2\left\|h \right\|_{L^{1}})\right|,\left|\varphi^{-1}(2\left\|h \right\|_{L^{1}})\right|\right\}$.
Using again the boundary conditions, we have that
\begin{center}
$\left|u(t)\right|\leq \int_0^t\left|u'(s)\right|ds \leq \int_0^T\left|u'(s)\right|ds\leq LT  \quad (t\in\left[0,T\right])$,
\end{center}
and hence 
\begin{center}
$\left\| u \right\|_{1}\leq L +LT$.     
\end{center}
Finally, if  $u=M(0, u)$, then   $u=0$, so the proof is complete.
\end{proof}
Let $\rho, \  \kappa \in \mathbb{R}$ be such that  $\left\|h\right\|_{L^{1}} <\kappa<a/2,  \  \rho>L +LT$ and consider the set 
\begin{center}
$V=\left\{(\lambda,u)\in[0,1]\times C^{1}_{0}:\left\|\lambda H(N_{f}(u))\right\|_{\infty}<\kappa, \  \left\|u\right\|_{1}<\rho\right\}$.
\end{center}
Since  the set  $\left\{0\right\}\times\left\{u\in C^{1}_{0}:\left\|u\right\|_{1}<\rho\right\}\subset V$, then we deduce that $V$ is nonempty. Moreover, it is clear that $V$ is open and bounded in  $ [0,1]\times C^{1}_{0}$  and  $\overline{V}\subset\Omega$. On the other hand using an argument similar to the one introduced in the proof of Lemma 6 in \cite{ma}, it is not difficult to see that $M:\overline{V}\rightarrow C^{1}_{0}$  is well defined, completely continuous and
\begin{center}
$ u\neq M(\lambda,u)$  \  for all  \ $(\lambda,u) \in\partial V$.
\end{center}
\subsection{Existence results}
\label{work2}
In this subsection, we present and prove our main result.
\begin{theorem}\label{teopri}
If  $f$  satisfies conditions of Lemma \ref{dallos}, then  problem (\ref{diri1}) has at least one solution.
\end{theorem}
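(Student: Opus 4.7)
The plan is to apply the extended homotopy invariance of the Leray--Schauder degree (Proposition \ref{propo1}) to the completely continuous homotopy $M$ on the open bounded set $V \subset [0,1] \times C^1_0$. All the hypotheses of Proposition \ref{propo1} have in fact already been verified in the paragraph preceding the theorem: $V$ is open and bounded in $[0,1]\times C^1_0$, $\overline{V}\subset\Omega$, the operator $M$ is completely continuous on $\overline{V}$, and, crucially, $u\neq M(\lambda,u)$ for every $(\lambda,u)\in\partial V$. Consequently $\deg_{LS}(I-M(\lambda,\cdot),V_\lambda,0)$ is well defined and independent of $\lambda\in[0,1]$.

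Next I would compute the degree at $\lambda=0$. From the definition (\ref{diri3}) we have $M(0,u)=H(\varphi^{-1}(-Q_\varphi(0)))$. Applying Lemma \ref{lebema} with $h\equiv 0$, the scalar $s=0$ is the unique solution of $\int_0^T\varphi^{-1}(-s)\,dt=0$, so $Q_\varphi(0)=0$ and therefore $M(0,\cdot)\equiv 0$. Hence $V_0=\{u\in C^1_0 : \|u\|_1<\rho\}$ and
\[
\deg_{LS}(I-M(0,\cdot),V_0,0)=\deg_{LS}(I,V_0,0)=1,
\]
since $0\in V_0$. By Proposition \ref{propo1} the same value $1$ is attained at $\lambda=1$, so $\deg_{LS}(I-M(1,\cdot),V_1,0)=1\neq 0$. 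The existence property of the Leray--Schauder degree then yields a fixed point $u\in V_1$ of $M(1,\cdot)$, and by Lemma \ref{dallos1} this $u$ solves (\ref{diri2}) with $\lambda=1$, which is precisely problem (\ref{diri1}).

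The substantive point is the no-fixed-points-on-the-boundary condition already invoked above, and this is where the assumptions on $f$ (through $h$ and $n$) enter decisively. One must argue that any fixed point $(\lambda,u)\in\overline{V}$ of $M$ lies in the \emph{interior} of $V$. This is exactly what Lemma \ref{dallos} provides: such a fixed point satisfies $\|\lambda H(N_f(u))\|_\infty\leq\|h\|_{L^1}$ and $\|u\|_1\leq L+LT$, and because $\rho$ and $\kappa$ were chosen with $\|h\|_{L^1}<\kappa<a/2$ and $\rho>L+LT$, both estimates are strict relative to the defining inequalities of $V$. Hence $(\lambda,u)\notin\partial V$, as required, and no additional computation is needed to conclude the proof.
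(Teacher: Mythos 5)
Your proposal is correct and follows essentially the same route as the paper: homotopy invariance of the Leray--Schauder degree via Proposition \ref{propo1} on the set $V$, the computation $M(0,\cdot)\equiv 0$ giving $\deg_{LS}(I-M(0,\cdot),V_0,0)=\deg_{LS}(I,B_{\rho}(0),0)=1$, and the existence property at $\lambda=1$. You merely spell out two steps the paper leaves implicit (that $Q_\varphi(0)=0$, and that Lemma \ref{dallos} together with the strict choices $\|h\|_{L^1}<\kappa$ and $\rho>L+LT$ rules out fixed points on $\partial V$), both of which are accurate.
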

\begin{proof} 
   Let $M$ be the operator given by (\ref{diri3}). Using Proposition \ref{propo1}, we deduce that
\begin{center}
$deg_{LS}(I-M(0,.),V_{0},0)=deg_{LS}(I-M(1,.),V_{1},0)$, 
\end{center}
where  $deg_{LS}(I-M(0,.),V_{0},0)=deg_{LS}(I,B_{\rho}(0),0)=1$. Thus, there exists $u \in V_{1}$ such that $ M(1,u)=u$, which is a solution for (\ref{diri1}). 
\end{proof}                                                                                         
\begin {remark}
 Note that Theorem   \ref{teopri}  is a generalization of Theorem \ref{teo1}.
\end {remark}
\begin{corollary}\label{exemp} 
Assume that  $\varphi$ is  an increasing homomorphism. Let  $h\in C([0,T],\mathbb{R^{+}})$ be such that 
\begin{center}
$\left\|h \right\|_{L^{1}}<a/2,  \  \left|f(t,x,y)\right| \leq f(t,x,y)x + h(t)$
\end{center}
for all  $x,y\in \mathbb{R} $ and $t\in [0,T]$. If $(\lambda,u)\in \Omega$ is such that $M(\lambda,u)=u$, then 
\begin{center}
$\left\|\lambda H(N_{f}(u))\right\|_{\infty}\leq\left\|h \right\|_{L^{1}}, \  \left\|u' \right\|_{\infty}\leq L$ \ e \ $\left\| u \right\|_{1}\leq L +LT$,
\end{center} 
where $L= \max \left\{\left|\varphi^{-1}(-2\left\|h \right\|_{L^{1}})\right|,\left|\varphi^{-1}(2\left\|h \right\|_{L^{1}})\right|\right\}$.
\end{corollary}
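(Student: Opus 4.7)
The plan is to recognize the corollary as an immediate specialization of Lemma \ref{dallos} to the choice $n(x)=x$. So the only work is to verify that the hypotheses of the lemma hold in this setting and then quote it.

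First I would set $n(x)=x$, which clearly belongs to $C^1(\mathbb{R},\mathbb{R})$ and satisfies $n(0)=0$. The growth assumption
$|f(t,x,y)|\leq f(t,x,y)x+h(t)$
is exactly $|f(t,x,y)|\leq f(t,x,y)n(x)+h(t)$, and the $L^1$-bound $\|h\|_{L^1}<a/2$ is assumed directly.

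The only condition requiring a short argument is the sign condition $\varphi(y)n'(x)y\geq 0$ for all $x,y\in\mathbb{R}$. Since $n'\equiv 1$, this reduces to checking $\varphi(y)y\geq 0$ for every $y\in\mathbb{R}$. Here I would use that $\varphi$ is an increasing homeomorphism with $\varphi(0)=0$: for $y>0$ we get $\varphi(y)>\varphi(0)=0$, so $\varphi(y)y>0$; for $y<0$ we get $\varphi(y)<0$, so $\varphi(y)y>0$; and $\varphi(0)\cdot 0=0$. In all cases $\varphi(y)y\geq 0$, which gives the required inequality.

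With all hypotheses of Lemma \ref{dallos} verified, I would invoke it directly on any $(\lambda,u)\in\Omega$ with $M(\lambda,u)=u$ to conclude
\[
\|\lambda H(N_f(u))\|_\infty\leq\|h\|_{L^1},\qquad \|u'\|_\infty\leq L,\qquad \|u\|_1\leq L+LT,
\]
with $L=\max\{|\varphi^{-1}(-2\|h\|_{L^1})|,\,|\varphi^{-1}(2\|h\|_{L^1})|\}$, finishing the proof. No step looks to be a genuine obstacle; the only subtlety is making sure that monotonicity of $\varphi$ combined with $\varphi(0)=0$ is spelled out so that the sign hypothesis of Lemma \ref{dallos} is unambiguously checked.
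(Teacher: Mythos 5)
Your proposal is correct and follows exactly the paper's own argument: specialize Lemma \ref{dallos} to $n(x)=x$, noting that the sign condition reduces to $\varphi(y)y\geq 0$, which holds because $\varphi$ is increasing with $\varphi(0)=0$. Your write-up is in fact slightly more explicit than the paper's in spelling out that last verification.
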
 
\begin{proof} Since   $\varphi$  is an increasing homomorphism we have that               
\begin{center}
$\varphi(y)y\geq 0$
\end{center}
for all $y\in \mathbb{R}$. Using  Lemma \ref{dallos}  with  $n(x)=x$  \ for all \  $x\in \mathbb R$, we can obtain the conclusion of Corollary \ref{exemp}. The proof is achieved. 
\end{proof}
\begin{theorem}\label{gallo}
If  $f$  satisfies conditions of Corollary \ref{exemp}, then  problem (\ref{diri1}) has at least one solution.
\end{theorem}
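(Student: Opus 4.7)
The plan is to observe that Theorem \ref{gallo} is essentially an immediate corollary of Theorem \ref{teopri}, once one verifies that the hypotheses of Corollary \ref{exemp} are a special instance of the hypotheses of Lemma \ref{dallos}. So the proof is a reduction: pick a concrete choice of $n$ and check that all the structural assumptions of Lemma \ref{dallos} hold, then invoke Theorem \ref{teopri}.

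First I would choose $n(x) = x$, so that $n \in C^{1}(\mathbb{R},\mathbb{R})$ with $n(0) = 0$ and $n'(x) = 1$ for all $x \in \mathbb{R}$. The sign condition $\varphi(y)\, n'(x)\, y \geq 0$ then collapses to $\varphi(y)\, y \geq 0$, which holds because $\varphi$ is an increasing homeomorphism with $\varphi(0) = 0$: indeed for $y > 0$ we have $\varphi(y) > \varphi(0) = 0$, and for $y < 0$ we have $\varphi(y) < 0$. The growth hypothesis from Corollary \ref{exemp}, namely $|f(t,x,y)| \leq f(t,x,y)\, x + h(t)$, is precisely the hypothesis $|f(t,x,y)| \leq f(t,x,y)\, n(x) + h(t)$ of Lemma \ref{dallos} with this choice of $n$. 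The condition $\|h\|_{L^{1}} < a/2$ is common to both statements.

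With these verifications, the hypotheses of Lemma \ref{dallos} are satisfied, which in turn means $f$ satisfies the conditions of Theorem \ref{teopri}. Applying Theorem \ref{teopri} directly then yields the existence of at least one solution to the Dirichlet problem (\ref{diri1}).

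I do not expect any real obstacle here; the only step that requires any thought is ensuring that the sign condition on $\varphi$ is correctly translated from "increasing homeomorphism with $\varphi(0) = 0$" into the pointwise inequality $\varphi(y)\, y \geq 0$, which is completely routine. The substantive analytic work (a priori bounds, Leray--Schauder degree computation, homotopy invariance) has already been carried out in Lemma \ref{dallos} and Theorem \ref{teopri}, so nothing new needs to be proved.
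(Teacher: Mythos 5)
Your proposal is correct and matches the paper's reasoning exactly: the paper proves Corollary \ref{exemp} by taking $n(x)=x$ in Lemma \ref{dallos} and using that $\varphi$ increasing with $\varphi(0)=0$ gives $\varphi(y)y\geq 0$, and Theorem \ref{gallo} is then stated as the immediate consequence via Theorem \ref{teopri}. No discrepancy to report.
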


Let us give now an application of Theorem \ref{gallo}  when $f$ is  unbounded.
\begin{example}
Let $\varphi:\mathbb{R} \rightarrow (-1,1)$ \ with \ $\varphi(y)=\frac{y}{\sqrt{1+y^2}}$, $f(t,x,y)=x-2$ \  and \ $h(t)=4$. Using  Theorem \ref{gallo}, we obtain that the problem
\begin{center}
$\left(\frac{u'}{\sqrt{1+u'^2}}\right)'=u-2, \  u(0)=u(T)=0$,
\end{center}
has at least one solution if $T<1/8$.
\end{example}

%%%%%%%%%%%%%%%%%%%%%%%%%%%%%%%%%%%%%%%%%%%%%%%%%%%%%%%%%%%%%%%%%%%%%%%%%%%%%%%%%%%%%%%%%%%%%%%%%%%%%%%%%%%%%%%%%%%%%%%%%%%%%%%%%%%%%%%%%%%%%%%%%%%%%%%%%%%%%%%%%%%%%%%%%%%%%%%%%%%%%%%%%%%%%%%%%%%%%%%%%%%%%%%%%%%%%%%%%%%%%%%%%%%%%%%%%%%%%%%%%%%%%%%%%%%%%%%%%%%%%%%%%%%%%%%%%%%%%%%%%%%%%%%%%%%%%%%%%%%%%%%%%%%%%%%%%%%%%%%%%%%%%%%%%%%%%%%%%%%%%%%%%%%%%%%%%%%%%%%%%%%%%%%%%%%%%%%%%%%%%%%%%%%%%%%%%%%%%%%%%%%%%%%%%%%%%%%%%%%%%%%%%%%%%%%%%%%%%%%%%%%%%%%%%%%%%%%%%%%%%%%%%%%%%%%%%%%%%%%%%%%%%%%%%%%%%%%%%%%%%%%%%%%%%%%%%%%%%%%%%%%%%%%%%%%%%%%%%%%%%%%%%%%%%%%%%%%%%%%%%%%%%%%%%%%%%%%%%%%%%%%%%%%%%%%%%%%%%%%%%%%%%%%%%%%%%%%%%%%%%%%%%%%%%%%%%%%%%%%%%%%%%%%%%%%%%%%

\section{Problems with singular homeomorphisms and tree-point boundary conditions}
\label{S:2}
In this section we  study the existence of at least one solution for boundary value problems of  the  type
\begin{equation}\label{misto1}
\left\{\begin{array}{lll}
(\varphi(u' ))'   = f(t,u,u') & & \\
u(T)=u(0)=u'(T), 
\end{array}\right.
\end{equation}
where  $\varphi: (-a,a) \rightarrow  \mathbb{R}$ is  a homeomorphism such that $\varphi(0)=0$ and $f:\left[0, T\right]\times \mathbb{R} \times \mathbb{R}\rightarrow \mathbb{R} $ is a continuous function.

In order to transform problem (\ref{misto1}) to a fixed point problem we  use a similar argument introduced in Lemma \ref{lebema}  for $h\in C$.
\begin{lemma}\label{dallos11}
$u \in C^{1}$ is a solution of  (\ref{misto1}) if and only if  $u$ is a fixed point of the operator   $M$ defined  on $C^{1}$ by
\begin{center}
$M(u)=\varphi^{-1}(-Q_{\varphi}(K(N_{f}(u)))) + H\left(\varphi^{-1} \left[K(N_{f}(u))-Q_{\varphi}(K(N_{f}(u)))\right]\right)$.
\end{center}
\end {lemma}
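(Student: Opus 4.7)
The plan is to prove the two implications separately, using the explicit formula for $M$ together with the defining properties of $H$, $K$, and $Q_{\varphi}$. A preliminary remark: in this section $\varphi^{-1}\colon\mathbb{R}\to(-a,a)$ is defined on the whole real line, so the construction of $Q_{\varphi}$ from Lemma \ref{lebema} extends verbatim to every $h\in C$ (the uniqueness argument from strict monotonicity and the existence argument via the intermediate value theorem on $[h_m,h_M]$ go through unchanged). Hence the formula for $M(u)$ makes sense for every $u\in C^{1}$.

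For the sufficiency direction, assume $u=M(u)$. I would first read off $u(0)$ by evaluating the formula at $t=0$, where $H(\,\cdot\,)(0)=0$, obtaining $u(0)=\varphi^{-1}(-Q_{\varphi}(K(N_{f}(u))))$. Differentiating the identity $u=M(u)$ gives
\[
u'(t)=\varphi^{-1}\!\left[K(N_{f}(u))(t)-Q_{\varphi}(K(N_{f}(u)))\right].
\]
Since $K(N_{f}(u))(T)=-\int_{T}^{T}N_{f}(u)=0$, evaluating at $t=T$ yields $u'(T)=\varphi^{-1}(-Q_{\varphi}(K(N_{f}(u))))=u(0)$. Moreover, integrating $u'$ over $[0,T]$ gives $u(T)-u(0)=\int_{0}^{T}\varphi^{-1}[K(N_{f}(u))-Q_{\varphi}(K(N_{f}(u)))]\,ds=0$ by the very definition of $Q_{\varphi}$, so $u(T)=u(0)$. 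Thus all three boundary conditions are satisfied. Applying $\varphi$ to $u'$ and differentiating once more, and using $(Kv)'=v$, gives $(\varphi(u'))'=N_{f}(u)$, which is the ODE.

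For the necessity direction, suppose $u\in C^{1}$ solves (\ref{misto1}). Integrating $(\varphi(u'))'=N_{f}(u)$ from $t$ to $T$ produces
\[
\varphi(u'(t))=\varphi(u'(T))+K(N_{f}(u))(t).
\]
Set $c:=\varphi(u'(T))$, so $u'(t)=\varphi^{-1}(K(N_{f}(u))(t)+c)$. Integrating over $[0,T]$ and using $u(T)=u(0)$ forces $\int_{0}^{T}\varphi^{-1}(K(N_{f}(u))(s)+c)\,ds=0$; by the uniqueness statement in (the adaptation of) Lemma \ref{lebema}, this pins down $c=-Q_{\varphi}(K(N_{f}(u)))$. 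The remaining boundary condition $u(0)=u'(T)$ then reads $u(0)=\varphi^{-1}(-Q_{\varphi}(K(N_{f}(u))))$. Integrating $u'$ from $0$ to $t$ and plugging in this value of $u(0)$ reproduces exactly the expression defining $M(u)(t)$.

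The work is essentially bookkeeping, and the one step that needs care is the middle one in the necessity argument: pinning down the constant of integration $c$ by appealing to uniqueness for the equation $\int_{0}^{T}\varphi^{-1}(h(s)-\alpha)\,ds=0$ with $h=K(N_{f}(u))$. Once the extension of Lemma \ref{lebema} to all of $C$ is in place (which is automatic because $\varphi^{-1}$ is now defined on all of $\mathbb{R}$), the identification $c=-Q_{\varphi}(K(N_{f}(u)))$ is immediate, and the rest of the proof is a direct computation using the properties of $H$, $K$, and $N_{f}$.
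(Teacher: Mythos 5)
Your proof is correct and follows essentially the same route as the paper: the necessity direction integrates the equation via $K$, uses $u(T)=u(0)$ together with the uniqueness part of (the extension of) Lemma \ref{lebema} to identify the integration constant as $-Q_{\varphi}(K(N_{f}(u)))$, and the sufficiency direction recovers the boundary conditions and the equation by evaluation, differentiation, and the defining property of $Q_{\varphi}$. The only difference is presentational: you make explicit the observation that $Q_{\varphi}$ extends to all of $C$ because $\varphi^{-1}$ is now globally defined, which the paper only states in passing before the lemma.
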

\begin{proof} 
 If  $u\in C^{1}$ is solution of (\ref{misto1}), then   
\begin{center}
$(\varphi(u'(t)))' = N_f (u)(t)=f(t,u(t),u'(t))$, \  $u(0)=u(T)$,\ $u(0)=u'(T)$
\end{center}
 for all $t\in\left[0, T\right]$. Applying  $K$ to both  members  and using the fact that  $u(0)=u'(T)$, we deduce that  
\begin{center}
 $\varphi(u'(t)) = \varphi(u(0)) + K(N_{f}(u))(t)$.
\end{center}
Composing with the function  $\varphi^{-1}$, we obtain   
\begin{center}
$u'(t) =\varphi^{-1}\left[ K(N_{f}(u))(t)+c\right]$,   
\end{center}
where $ c=\varphi (u(0))$. Integrating from 0 to  $t\in\left[0, T\right]$, we  have that               
\begin{center}
 $ u(t)=u(0)+ H\left( \varphi^{-1} \left[K(N_{f}(u))+c \right]\right)(t)$. 
\end{center}
Because  $u(0)=u(T)$, then                 
\begin{center}
$\int_0^T \varphi^{-1}\left[K(N_{f}(u))(t)+c\right]dt=0$.
\end{center} 
Using an argument similar  to  the   introduced in Lemma  \ref{lebema}, it follows that $c=-Q_{\varphi}(K(N_{f}(u)))$. Hence,                      
\begin{center}
$u=\varphi^{-1}(-Q_{\varphi}(K(N_{f}(u))))+ H \left( \varphi^{-1}\left[ K(N_{f}(u))-Q_{\varphi}(K(N_{f}(u)))\right] \right) $.
\end{center}

Let $u\in C^{1}$ be such that $ u=M(u)$. Then
\begin{equation}\label{ptof}
u(t)=\varphi^{-1}(-Q_{\varphi}(K(N_{f}(u))))+ H\left( \varphi^{-1}\left[K(N_{f}(u))-Q_{\varphi}(K(N_{f}(u)))\right]\right)(t)
\end{equation} for all $t\in\left[0, T\right]$. Since  $\int_0^T \varphi^{-1}\left[K(N_{f}(u))(t)-Q_{\varphi}(K(N_{f}(u)))\right]dt=0$, therefore, we have that  $u(0)=u(T)$. Differentiating  (\ref{ptof}), we obtain that 
\begin{center}
$u'(t)=\varphi^{-1}\left[K(N_{f}(u))-Q_{\varphi}(K(N_{f}(u)))\right](t)$. 
\end{center}
 In particular,
\begin{center}
$u'(T)=\varphi^{-1}(0-Q_{\varphi}(K(N_{f}(u))))= \varphi^{-1}(-Q_{\varphi}(K(N_{f}(u))))=u(0)$.
\end{center}
Applying $\varphi$ to both members and  differentiating again, we deduce that
\begin{center}
$(\varphi(u'(t)))' = N_f (u)(t)$, \ \  $u(0)=u(T)$, \ $u(0)=u'(T)$
\end{center}
 for all $t\in\left[0, T\right]$. This completes the proof.
\end{proof} 
\begin{lemma}\label{parecido}
The operator   $M:C^{1} \rightarrow C^{1}$ is completely continuous.
\end{lemma}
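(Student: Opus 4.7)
The plan is to rewrite the defining formula as $M(u)(t) = c(u) + \int_0^t \varphi^{-1}(w_u(s))\,ds$, where $c(u) := \varphi^{-1}(-Q_\varphi(K(N_f(u))))$ is a real constant and $w_u := K(N_f(u)) - Q_\varphi(K(N_f(u))) \in C$, and then to verify separately that $M$ is continuous and that it sends bounded sets into relatively compact subsets of $C^1$.

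For continuity I would observe that $M$ is a composition of continuous operators: $N_f : C^1 \to C$ is continuous because $f$ is continuous; $K : C \to C$ is continuous linear; the superposition operator $\varphi^{-1}: C \to C$ given by $v \mapsto \varphi^{-1}\circ v$ is continuous because $\varphi^{-1}: \mathbb{R} \to (-a,a)$ is continuous; and $H : C \to C^1$ is continuous linear. The only ingredient that needs adaptation is $Q_\varphi$. Since $\varphi^{-1}$ is now defined on all of $\mathbb{R}$, the argument of Lemma \ref{lebema} carries over with the domain $B$ replaced by $C$: for each $h \in C$, the continuous map $s \mapsto \int_0^T \varphi^{-1}(h(t)-s)\,dt$ is strictly decreasing, nonnegative at $s = h_m$ and nonpositive at $s = h_M$, giving a unique root $Q_\varphi(h) \in [h_m, h_M]$ with $|Q_\varphi(h)| \leq \|h\|_\infty$; continuity follows from the same dominated-convergence argument used in the proof of Lemma \ref{lebema}.

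For compactness on bounded sets, let $B \subset C^1$ be bounded. By continuity of $f$, the set $N_f(B)$ is bounded in $C$, hence $K(N_f(B))$ is bounded in $C$ and $Q_\varphi(K(N_f(B)))$ is bounded in $\mathbb{R}$; consequently $\{w_u : u \in B\}$ is bounded in $C$. Moreover, since $(K(N_f(u)))'(t) = N_f(u)(t)$ is uniformly bounded on $B$, the family $\{K(N_f(u)) : u \in B\}$ is equi-Lipschitz, and subtracting a constant preserves equicontinuity, so $\{w_u : u \in B\}$ is equicontinuous. Now $M(u)'(t) = \varphi^{-1}(w_u(t))$ is uniformly bounded by $a$ because $\varphi^{-1}$ has range $(-a,a)$, and the uniform continuity of $\varphi^{-1}$ on bounded real intervals transfers equicontinuity from $\{w_u\}$ to $\{\varphi^{-1}\circ w_u\}$. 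Hence $\{M(u)' : u \in B\}$ is uniformly bounded and equicontinuous; integrating, $\{M(u) : u \in B\}$ becomes equi-Lipschitz with constant $a$ and uniformly bounded. Arzel\`a--Ascoli then yields relative compactness of $M(B)$ in $C^1$. The main obstacle is the adaptation of $Q_\varphi$ to the singular setting and the verification that it is continuous and sends bounded sets into bounded sets on the whole space $C$ rather than only on $B$; once that is in hand, the rest assembles routinely.
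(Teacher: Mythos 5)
Your proposal is correct and follows essentially the same route as the paper: bound $N_f$ on bounded sets, deduce uniform boundedness and equicontinuity of $K(N_f(u))-Q_\varphi(K(N_f(u)))$, and apply Arzel\`a--Ascoli, the only cosmetic difference being that you transfer equicontinuity through $\varphi^{-1}$ via its uniform continuity on compacta, whereas the paper first extracts a uniformly convergent subsequence and then applies the continuity of the superposition operator $\varphi^{-1}:C\to C$. You are in fact somewhat more complete than the paper, which leaves the continuity half of ``completely continuous'' and the extension of $Q_\varphi$ from $B$ to all of $C$ implicit (the latter is only mentioned in passing before Lemma \ref{dallos11}); just note that $G_h$ is strictly monotone rather than necessarily strictly decreasing, since $\varphi$ is not assumed increasing.
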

\begin{proof}
Let $\Lambda \subset C^{1}$  be a bounded set. Then, if  $u\in\Lambda$, there exists a constant  $\rho>0$ such that 
\begin{equation}\label{dp}
\left\| u\right\|_{1}\leq \rho.
\end{equation}
Next, we show that $\overline{M(\Lambda)}\subset  C^{1}$ is a compact set. Let $(v_{n})_{n} $  be a sequence in   $M(\Lambda)$, and let 
 $(u_{n})_{n}$  be a sequence  in  $\Lambda$ such that  $v_{n}=M(u_{n})$. Using (\ref{dp}), we have that there exists a constant  $L>0$ such that, for all $n\in \mathbb{N}$,  
\begin{center}
 $\left\| N_{f}(u_{n})\right\|_{\infty}\leq L$,
\end{center}
which implies that
\begin{center}
$\left\| K(N_{f}(u_{n}))-Q_{\varphi}( K(N_f(u_{n}))) \right\|_{\infty}\leq 2LT$.
\end{center}
Hence the sequence  $( K(N_{f}(u_{n}))-Q_{\varphi}( K(N_f(u_{n}))))_{n}$ is bounded in  $C$. Moreover, for  $t, t_{1}\in\left[0, T\right]$ and for all $n\in \mathbb{N}$, we have that
\begin{align*}
&\left| K(N_{f}(u_{n}))(t)-Q_{\varphi}( K(N_f(u_{n}))) - K(N_{f}(u_{n}))(t_{1}) + Q_{\varphi}( K(N_f(u_{n})))        \right|\\
&\leq \left|-\int_{t}^{T} f(s,u_{n}(s),u_{n}'(s))ds + \int_{t_{1}}^{T} f(s,u_{n}(s),u_{n}'(s))ds\right|\\
&\leq \left|\int_{t_1}^t f(s,u_n(s),u'_n(s))ds\right|\\
&\leq L\left|t-t_1\right|, 
\end{align*} 
which implies that   $( K(N_{f}(u_{n}))-Q_{\varphi}( K(N_f(u_{n}))))_{n}$ is equicontinuous. Thus, by the Arzel\`a-Ascoli  theorem  there is a subsequence of 
$(K(N_{f}(u_{n}))-Q_{\varphi}( K(N_f(u_{n}))))_{n}$, which we call  $( K(N_{f}(u_{n_{j}})) - Q_{\varphi}( K(N_f(u_{n_{j}}))))_{j}$, which is  convergent in $C$. Using  that  $\varphi^{-1}: C \rightarrow B_{a}(0)\subset C$ is continuous it follows from   
\begin{center}
$M(u_{n_{j}})'=\varphi^{-1} \left[K(N_{f}(u_{n_{j}})) - Q_{\varphi}( K(N_f(u_{n_{j}})))\right] $
\end{center}
 that the sequence  $(M(u_{n_{j}})')_{j}$  is  convergent in $C$. Then, passing to a subsequence if necessary, we obtain 
 that   $(v_{n_{j}})_{j}=( M(u_{n_{j}}))_{j}$ is  convergent in  $C^{1}$. Finally, let  $(v_{n})_{n}$  be a sequence in  $\overline{M(\Lambda)}$. Let  $(z_{n})_{n}\subseteq M(\Lambda)$  be such that
\[
\lim_{n \to \infty}\left\| z_{n}-v_{n}\right\|_{1}=0.
\]
Let $(z_{n_{j}})_{j}$ be a subsequence of  $(z_{n})_{n}$  such that  converge to $z$. It follows that  $z\in \overline{M(\Lambda)}$ and  $(v_{n_{j}})_{j}$ converge 
to $z$. This concludes the proof.
\end{proof}
The next result is based on Schauder's fixed point theorem.
\begin{theorem}
Let $f:[0,T]\times \mathbb{R}\times \mathbb{R} \longrightarrow \mathbb{R}$ be continuous. Then  (\ref{misto1})  has at least one solution.
\end {theorem}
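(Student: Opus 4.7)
The plan is to apply Schauder's fixed point theorem to the operator $M$ constructed in Lemma \ref{dallos11}. By Lemma \ref{dallos11}, solutions of (\ref{misto1}) are exactly fixed points of $M$, and by Lemma \ref{parecido}, $M:C^{1}\to C^{1}$ is completely continuous. So the only missing ingredient is a closed, bounded, convex set in $C^{1}$ that $M$ maps into itself.

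The decisive observation is that the homeomorphism $\varphi$ is singular, so $\varphi^{-1}$ takes values in the bounded interval $(-a,a)$. From the formula
\begin{center}
$M(u)'(t)=\varphi^{-1}\bigl[K(N_{f}(u))-Q_{\varphi}(K(N_{f}(u)))\bigr](t)$,
\end{center}
I immediately get $\|M(u)'\|_{\infty}<a$ for every $u\in C^{1}$, without any assumption on $f$ beyond continuity. Similarly, $|M(u)(0)|=|\varphi^{-1}(-Q_{\varphi}(K(N_{f}(u))))|<a$, and by integrating the bound on $M(u)'$ we obtain $|M(u)(t)|\leq |M(u)(0)|+\int_{0}^{T}|M(u)'(s)|\,ds < a+aT$. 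Therefore
\begin{center}
$\|M(u)\|_{1}=\|M(u)\|_{\infty}+\|M(u)'\|_{\infty}\leq 2a+aT$
\end{center}
for \emph{every} $u\in C^{1}$.

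Consequently, if I set $R=2a+aT+1$ (any number strictly bigger than $2a+aT$ works) and let $\overline{B}=\{u\in C^{1}:\|u\|_{1}\leq R\}$, then $\overline{B}$ is closed, bounded, and convex, and $M(\overline{B})\subset \overline{B}$. Since $M$ restricted to $\overline{B}$ remains completely continuous by Lemma \ref{parecido}, Schauder's fixed point theorem yields a fixed point $u\in\overline{B}$, which by Lemma \ref{dallos11} is a solution of (\ref{misto1}).

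There is no real obstacle here: the whole point of having a singular homeomorphism is precisely that the range $(-a,a)$ of $\varphi^{-1}$ provides a built-in a priori $C^{1}$ bound, so the argument reduces to a routine Schauder application. The technical work was done already in Lemmas \ref{dallos11} and \ref{parecido}; this theorem only needs to assemble them.
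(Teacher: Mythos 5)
Your proof is correct and follows essentially the same route as the paper: both exploit that $\varphi^{-1}$ has range in $(-a,a)$ to get the uniform bound $\left\| M(u)\right\|_{1}\leq 2a+aT$, combine this with Lemmas \ref{dallos11} and \ref{parecido}, and conclude by Schauder's fixed point theorem. Your version is slightly more explicit in naming the invariant closed ball, but the argument is the same.
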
 
\begin{proof} 
Let $u\in C^{1}$. Then                
\begin{center}
$M(u)=\varphi^{-1}(-Q_{\varphi}(K(N_{f}(u))))+H \left(\varphi^{-1} \left[ K(N_{f}(u))-Q_{\varphi}(K(N_{f}(u)))\right]\right)$,
\end{center}
where
\begin{center}
$M(u)(0)=\varphi^{-1}(-Q_{\varphi}(K(N_{f}(u))))=M(u)(T)$,
\end{center}
\begin{center}
$M(u)'(T)=\varphi^{-1}(-Q_{\varphi}(K(N_{f}(u))))=M(u)(0)$.
\end{center}
Moreover,
\begin{center}
$\left\| M(u)'\right\|_{\infty}=\left\|\varphi^{-1} \left[ K(N_{f}(u))-Q_{\varphi}(K(N_{f}(u)))\right] \right\|_{\infty}<a$
\end{center}
and
\begin{center}
$\left\| M(u)\right\|_{\infty}<a +aT$.
\end{center}
Hence,
\begin{center}
$\left\| M(u)\right\|_{1}=\left\| M(u)\right\|_{\infty}+\left\| M(u)'\right\|_{\infty}<a+aT+a=2a+aT$.
\end{center}
 Because the operator  $M$ is completely continuous and bounded, we can use Schauder's fixed point theorem to deduce the existence  of  at least one fixed point. This, in turn, implies that problem  (\ref{misto1})  has at least one solution. The proof is complete.
\end{proof}

%%%%%%%%%%%%%%%%%%%%%%%%%%%%%%%%%%%%%%%%%%%%%%%%%%%%%%%%%%%%%%%%%%%%%%%%%%%%%%%%%%%%%%%%%%%%%%%%%%%%%%%%%%%%%%%%%%%%%%%%%%%%%%%%%%%%%%%%%%%%%%%%%%%%%%%%%%%%%%%%%%%%%%%%%%%%%%%%%%%%%%%%%%%%%%%%%%%%%%%%%%%%%%%%%%%%%%%%%%%%%%%%%%%%%%%%%%%%%%%%%%%%%%%%%%%%%%%%%%%%%%%%%%%%%%%%%%%%%%%%%%%%%%%%%%%%%%%%%%%%%%%%%%%%%%%%%%%%%%%%%%%%%%%%%%%%%%%%%%%%%%%%%%%%%%%%%%%%%%%%%%%%%%%%%%%%%%%%%%%%%%%%%%%%%%%%%%%%%%%%%%%%%%%%%%%%%%%%%%%%%%%%%%%%%%%%%%%%%%%%%%%%%%%%%%%%%%%%%%%%%%%%%%%%%%%%%%%%%%%%%%%%%%%%%5

\section{Problems with classic homeomorphisms and tree-point boundary conditions}
\label{S:3}
We finally consider boundary value problems of the form
\begin{equation}\label{misto2}
\left\{\begin{array}{lll}
(\varphi(u' ))'   = f(t,u,u') & & \\
u(T)=u'(0)=u'(T), 
\end{array}\right.
\end{equation} 
where  $\varphi: \mathbb{R} \rightarrow  \mathbb{R}$ is  a homeomorphism such that $\varphi(0)=0$ and $f:\left[0, T\right]\times \mathbb{R} \times \mathbb{R}\rightarrow \mathbb{R} $ is a continuous function. We remember that an  \textsl{solution } of this problem  is any function $u:\left[0, T\right]\rightarrow \mathbb{R}$ of class $C^{1}$ such that  $\varphi(u')$ is continuously differentiable, satisfying  the   boundary  conditions   and $(\varphi(u'(t) ))'  = f(t,u(t),u'(t))$ for all $t\in\left[0,T\right]$.

Let us consider the operator
\begin{center}
$M_{1}:C^{1}\rightarrow C^{1}$,

$u \mapsto S(u)+ Q(N_{f}(u)) + K(\varphi^{-1}\left[H( N_f (u)-Q(N_{f}(u)))+\varphi(S(u))\right])$.
\end{center}

Analogously to the section \ref{S:0}, here  $\varphi^{-1}$ is understood as the operator  $\varphi^{-1}: C \rightarrow C$ defined for $\varphi^{-1}(v)(t)=\varphi^{-1}(v(t))$. It is clear that $\varphi^{-1}$ is continuous and sends bounded sets into bounded sets.
\begin{lemma}\label{mate2}
$u \in C^{1}$ is  a solution  of (\ref{misto2}) if and only if  $u$ is a fixed point of the operator $M_{1}$.
\end {lemma}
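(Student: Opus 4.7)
The plan is to verify both implications directly, mirroring the proof of Lemma \ref{dallos11} but adapting the book-keeping to the boundary conditions $u(T)=u'(0)=u'(T)$. The key computational facts I will use throughout are: $K(g)(T)=0$ and $K(g)'(t)=g(t)$ for any $g\in C$; $H(g)(0)=0$, $H(g)(T)=T\cdot Q(g)$ and $H(g)'(t)=g(t)$; $S(u)$ and $Q(N_f(u))$ are constant functions; and $H(N_f(u)-Q(N_f(u)))(t)=H(N_f(u))(t)-t\,Q(N_f(u))$.

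For the forward implication, I would take a solution $u$ and integrate $(\varphi(u'(t)))'=N_f(u)(t)$ from $0$ to $t$, using $u'(0)=u(T)=S(u)$ to obtain
\[
\varphi(u'(t))=\varphi(S(u))+H(N_f(u))(t).
\]
Evaluating at $t=T$ and invoking $u'(T)=u'(0)$ forces $H(N_f(u))(T)=0$, that is, $Q(N_f(u))=0$, so $H(N_f(u))=H(N_f(u)-Q(N_f(u)))$. Applying $\varphi^{-1}$ gives a formula for $u'$; then integrating from $t$ to $T$ and using $u(T)=S(u)=S(u)+Q(N_f(u))$ yields
\[
u(t)=S(u)+Q(N_f(u))+K\bigl(\varphi^{-1}[H(N_f(u)-Q(N_f(u)))+\varphi(S(u))]\bigr)(t),
\]
which is exactly $u=M_{1}(u)$.

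For the converse, suppose $u=M_1(u)$. Evaluating at $t=T$, the term $K(\cdot)(T)$ vanishes, so $u(T)=u(T)+Q(N_f(u))$, forcing $Q(N_f(u))=0$. Hence the fixed-point identity collapses to
\[
u(t)=u(T)+K\bigl(\varphi^{-1}[H(N_f(u))+\varphi(u(T))]\bigr)(t).
\]
Differentiating and using $K(g)'=g$ gives $\varphi(u'(t))=H(N_f(u))(t)+\varphi(u(T))$. Evaluating at $t=0$ yields $u'(0)=u(T)$, and evaluating at $t=T$ together with $H(N_f(u))(T)=T\,Q(N_f(u))=0$ yields $u'(T)=u(T)$. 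Finally, differentiating once more gives $(\varphi(u'(t)))'=N_f(u)(t)$, so $u$ solves (\ref{misto2}).

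I do not expect any genuine obstacle here; the argument is a direct adaptation of Lemma \ref{dallos11}, and the only delicate point is realizing that the fixed-point equation at $t=T$ automatically produces the compatibility condition $Q(N_f(u))=0$, which is what reconciles the definition of $M_1$ (containing a $Q(N_f(u))$ summand) with the stronger information $Q(N_f(u))=0$ that a genuine solution must satisfy.
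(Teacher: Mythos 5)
Your proof is correct and follows essentially the same route as the paper: the paper organizes the argument as a chain of equivalences, but the underlying computations are identical, including the key observation that evaluating the fixed-point identity at $t=T$ (where $K(\cdot)(T)=0$) forces $Q(N_f(u))=0$, which is exactly the compatibility condition $u'(0)=u'(T)$ noted in the remark following the lemma.
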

\begin{proof}
Let $ u \in C^{1}$, we have the following equivalences:   
  
\medskip

$ (\varphi(u'))'  = N_f (u), \   u'(T)= u'(0), \ u'(0)=u(T)$
 \medskip

\medskip

\noindent
$ \Leftrightarrow(\varphi(u'))' = N_f(u)- Q(N_{f}(u))$,    

$Q(N_{f}(u))=0, \  u'(0)=u(T)$

\medskip

\medskip

\noindent
$ \Leftrightarrow \varphi(u')= H( N_f (u)-Q(N_{f}(u)))+\varphi(u'(0))$,
  
$Q(N_{f}(u))=0, \  u'(0)=u(T) $

 \medskip

\medskip

\noindent  
$ \Leftrightarrow u'= \varphi^{-1}\left[H( N_f (u)-Q(N_{f}(u)))+\varphi(u'(0))\right]$,
 
$Q(N_{f}(u))=0,  \  u'(0)=u(T)$ 

 \medskip

\medskip

\noindent 
$ \Leftrightarrow u= u(T)+ K(\varphi^{-1}\left[H( N_f (u)-Q(N_{f}(u)))+ \varphi(u'(0))\right])$,
 
$Q(N_{f}(u))=0, \  u'(0)=u(T) $

 \medskip

\medskip

\noindent    
$\Leftrightarrow  u= u(T)+ Q(N_{f}(u)) + K(\varphi^{-1}\left[H( N_f (u)-Q(N_{f}(u)))+\varphi(u(T))\right])$

 \medskip

\medskip

\noindent
$\Leftrightarrow  u= S(u)+ Q(N_{f}(u)) + K(\varphi^{-1}\left[H( N_f (u)-Q(N_{f}(u)))+\varphi(S(u))\right])$.
\end{proof}

\begin {remark}
 Note that  $u'(T)= u'(0) \Leftrightarrow  Q(N_{f}(u))=0$.
\end {remark}
Using an argument similar  to  the   introduced in Lemma \ref{parecido}, it is easy to see that, $M_{1}:C^{1}\rightarrow C^{1}$  is completely continuous.

In order to apply Leray-Schauder  degree to the operator $M_{1}$, we  introduced a family of problems  depending on a parameter $\lambda$. We remember  that to each  continuous function  $f:\left[0, T\right]\times \mathbb{R} \times \mathbb{R}\rightarrow \mathbb{R}$ we associate 
its Nemytskii operator $N_{f}: C^{1}\rightarrow C$ defined by
\begin{center}
$N_{f}(u)(t) = f(t, u(t), u'(t))$.
\end{center}
For  $ \lambda \in [0,1]$, we consider the family of boundary value problems
\begin{equation}\label{misto3}
\left\{\begin{array}{lll}
(\varphi(u' ))'   = \lambda N_{f}(u)+(1-\lambda)Q(N_{f}(u)) & & \\
u(T)=u'(0)=u'(T). 
\end{array}\right.
\end{equation}
 Notice that (\ref{misto3}) coincide with (\ref{misto2}) for $\lambda =1$. So, for  each $ \lambda \in [0,1]$, the operator associated  to \ref{misto3} by Lemma \ref{mate2} is the  operator $M(\lambda,\cdot)$, where $M$  is defined on $[0,1]\times C^{1}$  by 
 \begin{center}
$ M(\lambda,u)=S(u)+Q(N_{f}(u)) + K(\varphi^{-1}\left[\lambda H( N_f (u)-Q(N_{f}(u)))+\varphi(S(u))\right])$.
\end{center}
Using the same arguments as in the proof  of Lemma \ref{parecido}  we show that the operator  $M$ is completely continuous. Moreover, using the same reasoning as above, the system (\ref{misto3}) (see Lemma \ref{mate2}) is equivalent to the problem  
\begin{center}
$u=M(\lambda, u)$.
\end{center}
\subsection{Existence results}
\label{work2}
In this subsection, we present and prove our main results. These results are inspired on works by Bereanu and  Mawhin \cite{ma} and Man\'asevich and  Mawhin \cite{man}. We denote by $deg_{B}$ the Brouwer degree and  for $deg_{LS}$ the 
Leray-Schauder degree, and define the mapping $G:\mathbb{R}^{2}\rightarrow \mathbb{R}^{2}$ by 
\begin{center}
$G:\mathbb{R}^{2}\rightarrow \mathbb{R}^{2},  \   (a,b)\mapsto (aT+bT^{2}-bT-\frac{1}{T}\int_0^T f(t,a+bt,b)dt, b-a-bT)$.
\end{center} 
 \begin{theorem}\label{teoemaprincipalmisto}
Assume that  $\Omega$ is an open bounded set in  $C^{1}$ such that the following conditions hold.
\begin{enumerate}
\item For each $ \lambda \in (0,1)$ the problem
\begin{equation}\label{misto4}
\left\{\begin{array}{lll}
(\varphi(u' ))'   = \lambda N_{f}(u) & & \\
u(T)=u'(0)=u'(T), 
\end{array}\right.
\end{equation}
has no solution on $\partial\Omega$.
\item  The equation
\begin{center}
$G(a,b)=(0,0)$,
\end{center}
has no solution on $\partial\Omega \cap \mathbb{R}^{2}$, where we consider the natural identification $(a,b)\approx a+bt$ of  $\mathbb R^{2}$ with related functions in $C^{1}$.
\item  The Brouwer degree	
\begin{center}
$deg_{B}(G,\Omega \cap  \mathbb{R}^{2},0)\neq 0$.
\end{center}
\end{enumerate}
Then problem (\ref{misto2}) has a solution.
\end {theorem}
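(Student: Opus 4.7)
The plan is to apply the homotopy invariance of the Leray-Schauder degree to the completely continuous family $M(\lambda,\cdot):\overline{\Omega}\to C^1$, $\lambda\in[0,1]$, and to compute the degree at $\lambda=0$ by reducing it to the Brouwer degree of $G$. I may assume throughout that (\ref{misto2}) has no solution on $\partial\Omega$, since otherwise the conclusion is immediate.

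The crucial preliminary observation is that, for every $\lambda\in[0,1]$, any fixed point $u$ of $M(\lambda,\cdot)$ is a solution of (\ref{misto3}), and integrating
\[
(\varphi(u'))'=\lambda N_f(u)+(1-\lambda)Q(N_f(u))
\]
from $0$ to $T$ together with the boundary constraint $u'(0)=u'(T)$ forces $T\,Q(N_f(u))=0$. Thus $Q(N_f(u))=0$, and the equation collapses to (\ref{misto4}) for every $\lambda$. Hypothesis 1 now rules out boundary fixed points for $\lambda\in(0,1)$, the standing assumption rules out $\lambda=1$, and for $\lambda=0$ the ODE reduces further to $(\varphi(u'))'\equiv 0$, forcing $u'$ to be constant and hence $u=a+bt$ for some $a,b\in\mathbb{R}$. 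The boundary condition $u'(0)=u(T)$ together with the identity $Q(N_f(u))=0$ then translate, respectively, into
\[
b-a-bT=0,\qquad \tfrac{1}{T}\int_0^T f(t,a+bt,b)\,dt=0,
\]
which together are exactly $G(a,b)=0$; hypothesis 2 excludes this on $\partial\Omega\cap\mathbb{R}^2$, so $M(0,\cdot)$ has no fixed point on $\partial\Omega$ either.

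By homotopy invariance one then gets
\[
deg_{LS}(I-M(1,\cdot),\Omega,0)=deg_{LS}(I-M(0,\cdot),\Omega,0).
\]
Unwinding the definition of $M$, with $S(u)$ the constant function $u(T)$, the identity $\varphi^{-1}(\varphi(S(u)))=S(u)$, and $K(u(T)\cdot\mathbf{1})(t)=u(T)(t-T)$, I obtain
\[
M(0,u)(t)=u(T)(1-T)+Q(N_f(u))+u(T)\,t,
\]
which lies in the two-dimensional subspace $E=\operatorname{span}\{1,t\}\subset C^1$. The reduction property of the Leray-Schauder degree then identifies the above Leray-Schauder degree at $\lambda=0$ with the Brouwer degree of $(I-M(0,\cdot))|_E$ on $\Omega\cap E$. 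Evaluating $u-M(0,u)$ on $u=a+bt$ yields the scalar components $aT+bT^2-bT-\tfrac{1}{T}\int_0^T f(t,a+bt,b)\,dt$ and $b-a-bT$, i.e.\ exactly $G(a,b)$ under the identification $a+bt\leftrightarrow(a,b)$. Hypothesis 3 then guarantees a nonzero Brouwer degree, so $M(1,\cdot)$ has a fixed point in $\Omega$, which is a solution of (\ref{misto2}).

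The substantive step is really the reduction to the Brouwer degree: one has to recognise that at $\lambda=0$ the operator $M(0,\cdot)$ collapses into the two-dimensional affine subspace $E$ (thanks to the clean cancellation $\varphi^{-1}\circ\varphi=\mathrm{id}$) and to match its coordinate expression with $G$ sign-for-sign and factor-for-factor. The only genuinely analytic input — the integration identity producing $Q(N_f(u))=0$ — is short, but is what makes the single boundary argument work uniformly in $\lambda$ and ties the three hypotheses of the theorem together.
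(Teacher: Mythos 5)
Your proposal is correct and follows essentially the same route as the paper: exclude fixed points of $M(\lambda,\cdot)$ on $\partial\Omega$ for each $\lambda\in[0,1]$ (via hypothesis 1, the standing assumption at $\lambda=1$, and hypothesis 2 at $\lambda=0$), apply homotopy invariance, and reduce $deg_{LS}(I-M(0,\cdot),\Omega,0)$ to $deg_{B}(G,\Omega\cap\mathbb{R}^{2},0)$ through the finite-dimensional range of $M(0,\cdot)$. The only cosmetic difference is that you derive $Q(N_{f}(u))=0$ uniformly in $\lambda$ by one integration, where the paper treats $\lambda\in(0,1]$ and $\lambda=0$ separately; the underlying computation is the same.
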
 
\begin{proof}
Let  $ \lambda \in (0,1]$. If $u$ is  a solution of (\ref{misto4}), then $Q(N_{f}(u))=0$, hence  $u$ is a solution of problem (\ref{misto3}). On the other hand, for $ \lambda \in (0,1]$, if  $u$ is a solution  of (\ref{misto3}) and because 
\begin{center}
$ Q\left(\lambda N_{f}(u)+(1-\lambda)Q(N_{f}(u))\right)=Q(N_{f}(u))$,
\end{center}
we have  $Q(N_{f}(u))=0$, then  $u$ is a solution  of (\ref{misto4}). It follows that, for  $ \lambda \in (0,1]$, problems  (\ref{misto3}) and  (\ref{misto4}) have the same solutions. We assume that for  $\lambda=1$, (\ref{misto3}) does not have a solution on $\partial\Omega$  since otherwise we are done with proof. It follows that   (\ref{misto3}) has no solutions  for  $ (\lambda, u) \in  (0,1]\times \partial\Omega$. If  $\lambda=0$,  then (\ref{misto3}) is equivalent to the problem 
\begin{equation}\label{misto5}
\left\{\begin{array}{lll}
(\varphi(u' ))'   = Q(N_{f}(u)) & & \\
u(T)=u'(0)=u'(T), 
\end{array}\right.
\end{equation}
and thus, if $u$ is a solution  of  (\ref{misto5}), we must  have  
\begin{equation}\label{sapo}
\int_0^T f(t,u(t),u'(t))dt=0.
\end{equation}
Moreover, $u$  is a function of the form $u(t)=a + bt, \ a=b-bT $. Thus, by (\ref{sapo})
\begin{center}
$\int_0^T f(t,a+bt,b)dt=0$,
\end{center}
which, together with hypothesis 2, implies that $u= b-bT+tb \notin \partial\Omega$. Thus we have proved that  (\ref{misto3}) has no solution in  $\partial\Omega$ for all  $ \lambda \in \left[0, 1\right]$. Then we have that  for each  $ \lambda \in \left[0, 1\right]$, the  Leray-Schauder degree  $deg_{LS}(I-M(\lambda, \cdot), \Omega, 0)$ is well defined, and by the homotopy invariance  imply that  
\begin{center}
$deg_{LS}(I-M(0,\cdot),\Omega,0)= deg_{LS}(I-M(1,\cdot),\Omega,0)$.
\end{center}
On the other hand, we have that
\begin{center}
$deg_{LS}(I-M(0,\cdot),\Omega,0) = deg_{LS}(I-(S+QN_{f}+KS) ,\Omega,0)$.
\end{center}
But the range of the mapping
\begin{center}
$u \mapsto S(u)+Q(N_{f}(u))+K(S(u))$
\end{center}
is contained in the subspace  of related functions, isomorphic to $\mathbb{R}^{2}$. Thus, using a reduction property  of  Leray-Schauder degree \cite{man7, man12}
\begin{align*}
& deg_{LS}(I-(S+QN_{f}+KS) ,\Omega,0)\\
& = deg_{B}\left(I-(S+QN_{f}+KS)\left|_{\overline{\Omega \cap  \mathbb{R}^{2}}}\right. ,\Omega\cap \mathbb{R}^{2}, 0\right)\\
&=deg_{B}(G,\Omega\cap  \mathbb{R}^{2},0)\neq 0.
\end{align*}
Then, $deg_{LS}(I-M(1,\cdot),\Omega,0)\neq 0$, where $I$ denotes the unit operator. Hence, there exists  $u\in\Omega$ such that  $M_{1}(u)=u$, which is a 
solution  for (\ref{misto2}).
\end{proof}
 The following result gives a priori bounds for the possible solutions of   (\ref{misto4}), adapts a technique introduced by Ward \cite{wardjr}.
 \begin{theorem}\label{teopr2}
 Assume that  $f$ satisfies the following conditions.
\begin{enumerate}
\item There exists $c \in C$ such that 
\begin{center}
$f(t,x,y)\geq c(t)$
\end{center}
for all $(t,x,y)\in [0, T]\times \mathbb{R}\times \mathbb{R}$.
\item  There exists  $M_{1}<M_{2}$  such that for all  $u\in C^{1}$,
\begin{center}
$\int_0^T f(t,u(t),u'(t))dt \neq 0$ \ if \ $u_{m}'\geq M_{2} $,
\end{center}
\begin{center}
$\int_0^T f(t,u(t),u'(t))dt \neq 0$ \ if \ $u_{M}'\leq M_{1} $.
\end{center}
\end{enumerate}

If  $(\lambda,u)\in (0,1)\times C^{1} $ is such that  $u$ is  solution  of  (\ref{misto4}), then
\begin{center}
$\left\|u\right\|_{1}< r(2+T)$,
\end{center}
where $$r=\max\left\{\left|\varphi^{-1}(L+2\left\|c^{-}\right\|_{L^{1}})\right|, \left|\varphi^{-1}(-L-2\left\|c^{-}\right\|_{L^{1}})\right|\right\},  \  L=\max\left\{ \left| \varphi(M_{2})\right|, \left| \varphi(M_{1})\right| \right\}.$$
\end {theorem}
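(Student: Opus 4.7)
The plan is to convert the boundary condition together with the two structural hypotheses on $f$ into a pointwise bound on $\varphi(u')$. Let $(\lambda,u)\in(0,1)\times C^{1}$ satisfy (\ref{misto4}). First I would integrate $(\varphi(u'))'=\lambda f(t,u,u')$ over $[0,T]$: the boundary condition $u'(0)=u'(T)$ collapses the left-hand side to zero, so since $\lambda>0$ we obtain $\int_0^T f(t,u(t),u'(t))\,dt=0$. This is the first payoff of the three-point condition.

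Next, with $\int_0^T f\,dt=0$ in hand, hypothesis (2) read contrapositively forces $u'_{m}<M_{2}$ and $u'_{M}>M_{1}$ simultaneously. A short case analysis---immediate if $u'_{m}\geq M_{1}$, and an intermediate-value argument using $u'_{M}>M_{1}$ otherwise---produces $\tau^{*}\in[0,T]$ with $M_{1}\leq u'(\tau^{*})\leq M_{2}$. Since a homeomorphism of $\mathbb{R}$ is strictly monotonic, it follows that
\[
|\varphi(u'(\tau^{*}))|\leq \max\{|\varphi(M_{1})|,|\varphi(M_{2})|\}=L.
\]

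Now integrating from $\tau^{*}$ to $t$ I would write $\varphi(u'(t))=\varphi(u'(\tau^{*}))+\lambda\int_{\tau^{*}}^{t} f(s,u(s),u'(s))\,ds$ and control the integral by $\lambda\int_0^T |f|\,ds$. The key estimate is that hypothesis (1) together with the identity $\int_0^T f\,dt=0$ yields
\[
\int_0^T |f|\,ds\leq \int_0^T (f-c)\,ds+\int_0^T |c|\,ds=-\int_0^T c\,ds+\int_0^T |c|\,ds=2\|c^{-}\|_{L^{1}},
\]
where I used $f\geq c$ in the first inequality, $\int_0^T f\,dt=0$ in the middle step, and the pointwise identity $|c|-c=2c^{-}$. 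Combining the pieces gives $|\varphi(u'(t))|\leq L+2\lambda\|c^{-}\|_{L^{1}}<L+2\|c^{-}\|_{L^{1}}$ since $\lambda<1$, so $|u'(t)|<r$ via $\varphi^{-1}$ (monotonicity takes care of the sign cases), and hence $\|u'\|_{\infty}<r$. The remaining boundary condition $u(T)=u'(0)$ then gives $|u(T)|\leq\|u'\|_{\infty}$, whence $|u(t)|\leq |u(T)|+\|u'\|_{\infty} T<r(1+T)$; adding the two norms one obtains $\|u\|_{1}<r(2+T)$, as required.

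The main obstacle is the compact estimate $\int_0^T |f|\,ds\leq 2\|c^{-}\|_{L^{1}}$: it requires using both a one-sided pointwise lower bound on $f$ and the global equality $\int_0^T f\,dt=0$ (itself a consequence of the boundary condition) in a single stroke, and the trick $|f|\leq (f-c)+|c|$ is what makes this succinct. The second delicate point is the existence of $\tau^{*}$ in Step 2---hypothesis (2) only forbids $u'$ from lying uniformly above $M_{2}$ or uniformly below $M_{1}$, and one must argue by intermediate values to exhibit a point of $u'$ inside $[M_{1},M_{2}]$.
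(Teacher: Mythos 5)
Your argument is correct and follows essentially the same route as the paper's proof: integrate the equation over $[0,T]$ to get $\int_0^T f\,dt=0$, use hypothesis (2) plus the intermediate value theorem to locate a point $\omega$ where $u'(\omega)$ lies between $M_1$ and $M_2$, bound $\varphi(u'(t))$ by integrating from $\omega$ and estimating $\int_0^T|f|\,ds\le 2\|c^{-}\|_{L^{1}}$ (your $|f|\le (f-c)+|c|$ is the same computation as the paper's $|f|\le f+2f^{-}$ with $f^{-}\le c^{-}$), and finish with $u(T)=u'(0)$. The only cosmetic difference is that the paper derives the strict inequality from $M_1<u'(\omega)<M_2$, hence $|\varphi(u'(\omega))|<L$, rather than from $\lambda<1$; the latter degenerates when $\|c^{-}\|_{L^{1}}=0$, so you should keep the strict inequalities at $\omega$ as the paper does.
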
 
\begin{proof}
Let   $(\lambda, u) \in (0,1)\times C^{1}$  be such that $u$ is a solution  of (\ref{misto4}). Then for all $t\in\left[0, T\right]$,
\begin{center}
$(\varphi(u'(t)))'  = \lambda N_f (u)(t),  \  u'(0)=u'(T)=u(T)$
\end{center}
and
\begin{center}
$ \int_ 0 ^T  f(t, u(t), u'(t)) dt=0$.
\end{center}
Using hypothesis 2, we have that
\begin{center}
$u_{m}'<M_{2}$  and  \  $u_{M}'> M_{1}$.
\end{center}
It follows that there exists  $\omega \in [0,T]$  such that  $M_{1}<u'(\omega)<M_{2}$  \  and  
\begin{center}
$\int_\omega ^t (\varphi(u'(s)))'ds= \lambda \int_\omega ^t  N_{f}(u)(s)ds$,
\end{center}
which implies that
\begin{center}
$\left| \varphi(u'(t))\right| \leq \left| \varphi(u'(\omega))\right|+  \int_0 ^T \left|f(s,u(s),u'(s))\right|ds$, 
\end{center}
where
\begin{center}
$\int_0 ^T \left|f(s,u(s),u'(s))\right|ds\leq \int_0 ^T f(s,u(s),u'(s))ds +2\int_0 ^T c^{-}(s)ds$.
\end{center}
Hence,
\begin{center}
$\left| \varphi(u'(t))\right| < L + 2\left\|c^{-}\right\|_{L^{1}} $,
\end{center}
where  $L= \max \left\{ \left| \varphi(M_{2})\right|, \left| \varphi(M_{1})\right| \right\}$ \  and  \ $t\in [0,T]$. It follows that  
\begin{center}
$\left\|u'\right\|_{\infty}< r$,
\end{center}
where  $r= \max\left\{\left|\varphi^{-1}(L+2\left\|c^{-}\right\|_{L^{1}})\right|,\ \left|\varphi^{-1}(-L-2\left\|c^{-}\right\|_{L^{1}})\right|\right\}$.
Because  $u\in C^{1}$ is such that  $u'(0)=u(T)$ we have that
\begin{center}
$\left|u(t)\right|\leq \left|u(T)\right|+ \int_ 0 ^T  |u'(s)| ds < r +rT   \  \  (t\in[0,T] ) $,
\end{center}
 and hence  $\left\|u\right\|_{1} =\left\|u\right\|_{\infty}+ \left\|u'\right\|_{\infty}< r+rT +r=r(2+T)$. This proves the theorem.
\end{proof} 
 Now we show the existence of at least one solution  for problem (\ref{misto2}) by means of Leray-Schauder degree. 
 \begin{theorem}\label{exem} 
Let $f$ be continuous and satisfy condition (1) and (2) of Theorem \ref{teopr2}. Assume that the following conditions hold for some   $\rho \geq r(2+T)$.  
\begin{enumerate}
\item  The equation
\begin{center}
$ G(a,b)=(0,0)$,
\end{center}
has no solution on  $\partial B_{\rho}(0)\cap\mathbb{R}^{2}$, where we consider the natural identification $(a,b)\approx a+bt$ of  $\mathbb R^{2}$ with related functions in $C^{1}$.
\item The Brouwer degree	
\begin{center}
$deg_{B}(G,B_{\rho}(0) \cap \mathbb{R}^{2},0)\neq 0$.
\end{center}
\end{enumerate}
Then problem (\ref{misto2}) has a solution.
\end{theorem}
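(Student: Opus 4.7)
The plan is to apply Theorem \ref{teoemaprincipalmisto} with the explicit choice $\Omega = B_{\rho}(0) \subset C^{1}$, so that the three abstract hypotheses of that theorem become immediate consequences of the a priori bound in Theorem \ref{teopr2} together with the two numbered assumptions of the present statement.

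First I would set $\Omega := B_{\rho}(0)$, an open bounded set in $C^{1}$, and verify hypothesis (1) of Theorem \ref{teoemaprincipalmisto}. For this, suppose $\lambda \in (0,1)$ and $u \in \partial\Omega$ is a solution of (\ref{misto4}). Since $f$ satisfies conditions (1)--(2) of Theorem \ref{teopr2}, that theorem yields the a priori estimate $\|u\|_{1} < r(2+T)$. Combined with the standing hypothesis $\rho \geq r(2+T)$, this forces $\|u\|_{1} < \rho$, contradicting $u \in \partial B_{\rho}(0)$. Hence no solution of (\ref{misto4}) can lie on $\partial\Omega$ for $\lambda \in (0,1)$, and hypothesis (1) of Theorem \ref{teoemaprincipalmisto} is satisfied.

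Next, I would note that hypothesis (2) of Theorem \ref{teoemaprincipalmisto}, namely that $G(a,b)=(0,0)$ has no solution on $\partial\Omega \cap \mathbb{R}^{2} = \partial B_{\rho}(0) \cap \mathbb{R}^{2}$, is precisely our assumption (1); and hypothesis (3), namely $\deg_{B}(G,\Omega \cap \mathbb{R}^{2},0) \neq 0$, is precisely our assumption (2). All three hypotheses of Theorem \ref{teoemaprincipalmisto} being in place, that theorem provides a solution of problem (\ref{misto2}).

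The argument is essentially a packaging step rather than a construction, so there is no real technical obstacle beyond being careful that the a priori bound from Theorem \ref{teopr2} is strict (it is, since the conclusion there reads $\|u\|_{1} < r(2+T)$) and that this strict bound combined with $\rho \geq r(2+T)$ suffices to exclude $u$ from $\partial B_{\rho}(0)$. Everything else is a direct invocation of Theorem \ref{teoemaprincipalmisto}.
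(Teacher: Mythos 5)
Your proposal is correct and follows exactly the paper's own argument: take $\Omega=B_{\rho}(0)$, use the a priori bound $\|u\|_{1}<r(2+T)\leq\rho$ from Theorem \ref{teopr2} to rule out solutions of (\ref{misto4}) on $\partial\Omega$, and observe that the remaining two hypotheses of Theorem \ref{teoemaprincipalmisto} are the stated assumptions. You merely spell out the boundary-exclusion step slightly more explicitly than the paper does.
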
 
\begin{proof}
Let   $(\lambda, u) \in (0,1)\times C^{1}$  be such that $u$ is a solution  of (\ref{misto4}). Using  Theorem  \ref{teopr2}, we have 
\begin{center}
$\left\|u\right\|_{1} =\left\|u\right\|_{\infty}+ \left\|u'\right\|_{\infty}< r+rT +r=r(2+T)$,
\end{center}
where $r= \max\left\{\left|\varphi^{-1}(L+2\left\|c^{-}\right\|_{L^{1}})\right|,\ \left|\varphi^{-1}(-L-2\left\|c^{-}\right\|_{L^{1}})\right|\right\}$. Thus 
the conditions  of  Theorem \ref{teoemaprincipalmisto} are satisfied with  $\Omega=B_{\rho}(0)$,  where $B_{\rho}(0)$   is the open 
ball in  $C^{1}$  center $0$ and    radius $\rho$. This concludes the proof.
\end{proof}

Let us give now an application of Theorem \ref{exem}.
\begin{example}
Let us consider the problem 
\begin{equation}\label{exemplo}
\left((u')^{3}\right)' = \frac{e^{u'}}{2} -1, \quad u(T)=u'(0)=u'(T),
\end{equation}
for  $M_{1}=-1, \  M_{2}=1 ,  \  \rho\geq (1+2T)^{1/3}(2+T)$  and  $c(t)=-1$ for all $t\in \left[0, T\right]$. So, problem (\ref{exemplo}) has at least one solution. 
\end{example}

%%%%%%%%%%%%%%%%%%%%%%%%%%%%%%%%%%%%%%%%%%%%%%%%%%%%%%%%%%%%%%%%%%%%%%%%%%%%%%%%%%%%%%%%%%%%%%%%%%%%%%%%%%%%%%%%%%%%%%%%%%%%%%%%%%%%%%%%%%%%%%%%%%%%%%%%%%%%%%%%%%%%%%%%%%%%%%%%%%%%%%%%%%%%%%%%%%%%%%%%%%%%%%%%%%%%%%%%%%%%%%%%%%%%%%%%%%%%%%%%%%%%%%%%%%%%%%%%%%%%%%%%%%%%%%%%%%%%%%%%%%%%%%%%%%%%%%%%%%%%%%%%%%%%%%%%%%%%

\section*{Acknowledgements} 
This research was supported by  CAPES and CNPq/Brazil. The author would like to thank to Dr. Pierluigi Benevieri for his kind advice and for the constructive revision of this paper.

\bibliographystyle{plain}

\renewcommand\bibname{References Bibliogr\'aficas}

\end{document}